\newcommand{\e}{\varepsilon}
\newcommand{\tl}{\text{li}}
\newcommand{\N}{\mathds{N}}
\newcommand{\R}{\mathds{R}}
\newcommand{\p}{\phantom}
\newcommand{\q}{\quad}
\newtheorem{thm}{Theorem}[section]
\newtheorem{lem}[thm]{Lemma}
\newtheorem{kor}[thm]{Corollary}
\newtheorem{prop}[thm]{Proposition}
\theoremstyle{remark}
\newtheorem*{rema}{Remark}
\theoremstyle{definition}
\newtheorem*{defi}{Definition}
\title{New bounds for the prime counting function $\pi(x)$}
\author{Christian Axler}
\begin{document}

\maketitle

%

\begin{abstract}
In this paper we establish a number of new estimates concerning the prime counting function $\pi(x)$,  which  improve the estimates proved in the
literature. As an application, we deduce a new result concerning the existence of prime numbers in small intervals.
\end{abstract}

\section{Introduction}

\noindent
After Euclid \cite{euc} proved that there are infinitly many primes, the question arised how fast
\begin{displaymath}
\pi(x) = \sum_{p \leq x} 1
\end{displaymath}
increase as $x \to \infty$. In 1793, Gauss \cite{gau} conjectured that
\begin{displaymath}
\pi(x) \sim \text{li}(x) = \int_0^x \frac{dt}{\log t}  \q\q (x \to \infty),
\end{displaymath}
which is equivalent to
\begin{equation} \label{101}
\pi(x) \sim \frac{x}{\log x} \q\q (x \to \infty).
\end{equation}
In 1896, Hadamard \cite{had} and de la Vall\'{e}e-Poussin \cite{dlvp} proved, independently, the relation \eqref{101}, which is actually known as the
\emph{Prime Number Theorem}. A well-known asymptotic formula for $\pi(x)$ is given by
\begin{equation} \label{102}
\pi(x) = \frac{x}{\log x} + \frac{x}{\log^2 x} + \frac{2x}{\log^3 x} + \frac{6x}{\log^4 x} + \ldots + \frac{(n-1)!x}{\log^n x} + O \left(
\frac{x}{\log^{n+1} x} \right).
\end{equation}

\noindent
In this short paper we prove the following upper and lower bound for $\pi(x)$ for $n=8$ .

\begin{thm} \label{t101}
If $x > 1$, then
\begin{equation} \label{103}
\pi(x) < \frac{x}{\log x} + \frac{x}{\log^2 x} + \frac{2x}{\log^3 x} + \frac{6.35x}{\log^4 x} + \frac{24.35x}{\log^5 x} + \frac{121.75x}{\log^6 x} +
\frac{730.5x}{\log^7 x} + \frac{6801.4x}{\log^8 x}.
\end{equation}
\end{thm}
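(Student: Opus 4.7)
The plan is to derive the bound from an explicit upper estimate on the Chebyshev function $\theta(x)=\sum_{p\le x}\log p$ combined with Abel summation and the asymptotic expansion of $\tl(x)$. First I would write
\begin{equation*}
\pi(x) \;=\; \frac{\theta(x)}{\log x} \;+\; \int_2^x \frac{\theta(t)}{t\log^2 t}\,dt,
\end{equation*}
which is standard partial summation. Then I would invoke a sharp modern explicit bound of the form $\theta(x)<x+\eta(x)$ valid for $x\ge x_0$ (for instance from Dusart or comparable recent work, where $\eta(x)$ has the shape $c\,x/\log^k x$ with $k$ reasonably large), and substitute to obtain
\begin{equation*}
\pi(x) \;<\; \frac{x}{\log x} \;+\; \int_2^x \frac{dt}{\log^2 t} \;+\; \frac{\eta(x)}{\log x} \;+\; \int_2^x \frac{\eta(t)}{t\log^2 t}\,dt.
\end{equation*}

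Next I would iterate the integration-by-parts identity
\begin{equation*}
\int_2^x \frac{dt}{\log^k t} \;=\; \frac{x}{\log^k x} - \frac{2}{\log^k 2} + k\int_2^x \frac{dt}{\log^{k+1} t}
\end{equation*}
seven times, expanding $\int_2^x dt/\log^2 t$ into the terms $x/\log^2 x + 2x/\log^3 x + 6x/\log^4 x + 24x/\log^5 x + 120x/\log^6 x + 720x/\log^7 x + 5040x/\log^8 x$ plus the remainder $7!\int_2^x dt/\log^9 t$ and bounded boundary contributions. The key quantitative step is to show that, once combined with the $\eta$-contributions from above, the cumulative error beyond the seventh term fits inside the slack amounts $0.35, 0.35, 1.75, 10.5, 1761.4$ built into the stated coefficients $6.35, 24.35, 121.75, 730.5, 6801.4$; the large buffer in the last coefficient is designed precisely to absorb the tail integral $7!\int_2^x dt/\log^9 t$ uniformly via the inequality $\int_2^x dt/\log^9 t \le x/(\log^9 x)\cdot (1+o(1))$ for $x$ beyond a computable threshold.

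The above machinery yields \eqref{103} only once $x$ exceeds some effective bound $x_1$ determined by the range of validity of the $\theta$-estimate together with the size of the integration-by-parts remainder. For $1<x\le x_1$ I would reduce to a finite verification: the right-hand side of \eqref{103} is monotone increasing on $(1,\infty)$ after a tiny initial region (easy to check by differentiation), so it suffices to check the inequality at each prime $p\le x_1$, since $\pi$ is constant between consecutive primes and jumps by $1$ at primes. This can be done by direct numerical evaluation using tabulated prime values.

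The main obstacle will be calibrating the choice of $\eta(x)$: the slack $6801.4-7!=1761.4$ must dominate \emph{both} the boundary contributions from the repeated integration by parts \emph{and} the $\theta$-error $\eta(x)/\log x + \int_2^x \eta(t)/(t\log^2 t)\,dt$, while simultaneously keeping $x_1$ small enough that the finite check in the intermediate range remains tractable. A secondary subtlety is that after partially absorbing errors into the coefficients of lower-order terms, one must verify that the non-absorbed portion of each error term really does decay fast enough in $\log x$ to be swallowed by the corresponding slack—this is what forces the particular values $6.35, 24.35, 121.75, 730.5, 6801.4$ rather than the bare factorials.
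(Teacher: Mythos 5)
Your large-$x$ machinery is essentially the paper's: the identity $\pi(x)=\theta(x)/\log x+\int_2^x\theta(t)/(t\log^2 t)\,dt$ plus an explicit bound $|\theta(x)-x|<\eta_k x/\log^k x$ is exactly what the paper packages into the Rosser--Schoenfeld function $J_{k,\eta_k,x_1}$, and your observation that the slack $0.35$ in the coefficient $6.35$ must match the $\theta$-error is correct (the paper uses $\eta_3=0.35$). But note what that choice costs: a bound of the shape $|\theta(x)-x|<0.35\,x/\log^3 x$ is only available for $x\ge e^{30}\approx 10^{13}$ (Proposition \ref{prop303}; the paper actually anchors its argument at $x_1=10^{14}$). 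So the threshold $x_1$ below which your asymptotic argument says nothing is of order $10^{13}$--$10^{14}$, and this is where your plan has a genuine gap: you propose to close the range $1<x\le x_1$ by checking the inequality at every prime $p\le x_1$ ``by direct numerical evaluation using tabulated prime values.'' There are roughly $3\cdot 10^{12}$ primes below $10^{14}$; a prime-by-prime verification over that range is not a routine finite check, and nothing in your write-up reduces it.

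The paper's missing ingredient is Proposition \ref{prop201} (Kotnik's bound $\Xi>10^{14}$ on the Skewes number): since $\pi(x)<\tl(x)$ for all $x\le 10^{14}$, on the intermediate range $5\cdot 10^5\le x\le 10^{14}$ it suffices to prove the smooth-function inequality $\delta(x)>\tl(x)$, which is done by one derivative comparison ($\delta'(x)-\tl'(x)\ge 0$ for $x\ge 5\cdot 10^5$) plus a single endpoint evaluation. The computer check is thereby pushed down to primes below $5\cdot 10^5$, which is trivial. Without this idea (or some substitute, e.g.\ stitching in weaker $\theta$-bounds such as $\eta_2=0.01$ valid from $7.7\cdot 10^9$ to cover part of the middle range, which still leaves hundreds of millions of primes to check), your proof does not terminate in a feasible computation, even though the analytic part for $x\ge x_1$ and the small-$x$ monotonicity argument are sound in outline. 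You should also make the tail bookkeeping explicit: the comparison $5040\,x/\log^9 x$ versus the slack $1761.4\,x/\log^8 x$ is harmless for $\log x\ge 3$, but the boundary terms at $t=2$ and the lower-order slacks $0.35,\,1.75,\,10.5$ need to be verified numerically at the actual anchor point $x_1$, which is what the paper's evaluation of $\pi(10^{14})-\theta(10^{14})/\log 10^{14}$ accomplishes.
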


\begin{thm} \label{t102}
If $x \geq 1332450001$, then
\begin{displaymath}
\pi(x) > \frac{x}{\log x} + \frac{x}{\log^2 x} + \frac{2x}{\log^3 x} + \frac{5.65x}{\log^4 x} + \frac{23.65x}{\log^5 x} + \frac{118.25x}{\log^6 x} +
\frac{709.5x}{\log^7 x} + \frac{4966.5x}{\log^8 x}.
\end{displaymath}
\end{thm}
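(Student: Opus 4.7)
The strategy is the one Axler uses throughout this series of papers: start from a sharp explicit effective prime number theorem for Chebyshev's $\vartheta$-function, pass to $\pi(x)$ via partial summation, expand $\tl$-type integrals via integration by parts, and finish with a numerical verification at the lower end. First I would invoke an explicit inequality of the form
\[
\vartheta(x) > x - \eta\,\frac{x}{\log^{N}x} \qquad (x \ge x_{0})
\]
with suitable $\eta$ and $N$ drawn from the best available explicit PNT-type bound (for instance Platt--Trudgian or Dusart), and combine it with the Abel identity
\[
\pi(x) = \frac{\vartheta(x)}{\log x} + \int_{2}^{x} \frac{\vartheta(t)}{t\log^{2} t}\, dt.
\]
Writing $\vartheta(t)=t+(\vartheta(t)-t)$, the main term becomes $x/\log x + \int_{2}^{x}dt/\log^{2}t$, while the error contribution is bounded in absolute value by a term of order $\eta\,x/\log^{N+1}x$.

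Next I would iterate the integration--by--parts identity
\[
k!\int_{a}^{x} \frac{dt}{\log^{k+1}t} = \frac{k!\,x}{\log^{k+1}x} - \frac{k!\,a}{\log^{k+1}a} + (k+1)!\int_{a}^{x}\frac{dt}{\log^{k+2}t}
\]
seven times, which reproduces the asymptotic expansion of $\tl(x)$ with the integer coefficients $(k-1)!\in\{1,1,2,6,24,120,720,5040\}$ for $k=1,\dots,8$, together with a remainder $8!\int_{a}^{x}dt/\log^{9}t$ and an additive constant. One further integration by parts bounds the remainder by $8!\,x/\log^{9}x$ plus terms of order $x/\log^{10}x$. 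The gap between the true coefficients $(k-1)!$ and the smaller constants $5.65,\,23.65,\,118.25,\,709.5,\,4966.5$ claimed in the theorem produces a surplus
\[
\frac{0.35\,x}{\log^{4}x} + \frac{0.35\,x}{\log^{5}x} + \frac{1.75\,x}{\log^{6}x} + \frac{10.5\,x}{\log^{7}x} + \frac{73.5\,x}{\log^{8}x},
\]
whose leading term $0.35\,x/\log^{4}x$ dominates both the integration-by-parts remainder and the $\vartheta$-error contribution for every $x$ exceeding some explicit threshold $x_{1}$ determined by solving a polynomial inequality in $\log x$. This settles the theorem on $[x_{1},\infty)$.

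For the remaining window $1332450001 \le x < x_{1}$ I would verify the inequality numerically using existing tables of $\pi(x)$ (e.g.\ Kulsha): since the right-hand side of the asserted inequality is continuous and eventually decreasing after one removes the leading term while $\pi$ jumps upward at each prime, it suffices to test the inequality just below each prime $p$ in that range, and the boundary value $1332450001$ is presumably chosen precisely because the inequality first turns strictly positive at, and remains positive from, that point on. The main obstacle will be twofold: picking the initial effective bound on $\vartheta(x)-x$ sharply enough that $x_{1}$ lies within feasible computational reach, and carefully tracking all additive and multiplicative constants (including the constant picked up when choosing the base point $a$ in the integration-by-parts expansion) so that the explicit numerical margin is still positive at $x = 1332450001$ rather than only asymptotically.
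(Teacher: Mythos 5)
Your outline reproduces the machinery the paper uses elsewhere (the identity $\pi(x)=\theta(x)/\log x+\int_2^x\theta(t)/(t\log^2t)\,dt$, an explicit bound on $\theta(x)-x$, repeated integration by parts, and a terminal computation), but it is not how the paper proves this particular theorem, and as written it has two genuine gaps. First, the error accounting is wrong: the constants $5.65,\,23.65,\,118.25,\,709.5,\,4966.5$ are exactly $6-0.35$, $24-0.35$, $120-0.35\cdot 5$, $720-0.35\cdot 30$, $5040-0.35\cdot 210$, i.e.\ they are calibrated precisely to the estimate $|\theta(x)-x|<0.35x/\log^3x$ (Proposition \ref{prop303}, valid only for $x\ge e^{30}$). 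So the ``surplus'' you exhibit does not dominate the $\theta$-error contribution --- it cancels it \emph{exactly}, with zero slack, at every order $\log^{-4}x,\dots,\log^{-8}x$. Positivity must instead come from the positive remainder $8!\int dt/\log^9 t$ (diminished by the tail of the $0.35$-term) together with the boundary constant $\pi(x_1)-\theta(x_1)/\log x_1$, and this forces precisely the careful numerical bookkeeping at $x_1$ (the $K_1$-type computations in the paper) that you defer; ``dominates for large $x$'' is not a valid step here.

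Second, the verification window. Any single bound of the shape $\eta_3 x/\log^3x$ strong enough for these constants needs $\eta_3\le 0.35$, and such a bound is only available for $x\ge e^{30}\approx 10^{13}$ (the paper even works from $x_1=10^{14}$). Your numerical range would then be $1332450001\le x<e^{30}$, and checking the inequality there ``from existing tables of $\pi(x)$'' is not realistic: tables are sparse, while the check must be made essentially prime by prime or on a grid of order $10^8$ points where $\pi$ is not tabulated. The paper bridges exactly this gap with a second, intermediate estimate, $|\theta(x)-x|<0.01x/\log^2x$ for $x\ge 7713133853$ (usable because $0.01x/\log^3x\le 0.35x/\log^4x$ when $\log x\le 35$), which carries the analytic argument down to $8\cdot10^9$ and leaves only a feasible prime-by-prime computation below that; your proposal has no analogue of this two-tier device. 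For the record, the paper actually deduces Theorem \ref{t102} from Theorem \ref{t104}: writing the asserted bound as $xR(\log x)/\log x$ and the Panaitopol-type bound as $x/S(\log x)$, it verifies the identity $y^{13}R(y)S(y)=y^{14}-T(y)$ with $T$ having positive coefficients, so that $xR(\log x)/\log x<x/S(\log x)<\pi(x)$ for $x\ge 1332479531$, and then closes the residual interval $[1332450001,\,1332479531]$ by monotonicity of $xR(\log x)/\log x$ plus a small computer check.
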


\noindent
Panaitopol \cite{pan3} showed another asymptotic formula for $\pi(x)$, by proving that
\begin{equation} \label{105}
\pi(x) = \frac{x}{\log x - 1 - \frac{k_1}{\log x} - \frac{k_2}{\log^2x} - \ldots - \frac{k_n(1+\alpha_n(x))}{\log^n x}}
\end{equation}
for every $n$, where $\lim_{x\to\infty}\alpha_n(x) = 0$ and positive integers $k_1, k_2, \ldots, k_n$ are given by the recurrence formula
\begin{displaymath}
k_n + 1! k_{n-1} + 2! k_{n-2} + \ldots + (n-1)! k_1 = n \cdot n!.
\end{displaymath}
For instance, we have $k_1 = 1$, $k_2 = 3$, $k_3 = 13$, $k_4 = 71$, $k_5 = 461$ and $k_6 = 3441$. In view of \eqref{105}, we find the following
estimates for $\pi(x)$ for $n=6$.

\begin{thm} \label{t103}
If $x \geq e^{3.804}$, then
\begin{equation} \label{106}
\pi(x) < \frac{x}{\log x - 1 - \frac{1}{\log x} - \frac{3.35}{\log^2x} - \frac{12.65}{\log^3x} - \frac{71.7}{\log^4 x} - \frac{466.1275}{\log^5 x} -
\frac{3489.8225}{\log^6 x}}.
\end{equation}
\end{thm}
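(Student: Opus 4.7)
The plan is to reduce \eqref{106} to a single polynomial inequality in $t = 1/\log x$ and derive it from Theorem \ref{t101}. First, write
\[
D(y) := y - 1 - \tfrac{1}{y} - \tfrac{3.35}{y^2} - \tfrac{12.65}{y^3} - \tfrac{71.7}{y^4} - \tfrac{466.1275}{y^5} - \tfrac{3489.8225}{y^6}
\]
for the denominator on the right-hand side of \eqref{106}. A direct numerical check shows $D(3.804) > 0$ (the threshold $3.804$ lying just above the largest real zero of $D$) and $D$ is strictly increasing on $[3.804,\infty)$, so \eqref{106} is equivalent to $\pi(x)\cdot D(\log x) < x$ for $x \geq e^{3.804}$.

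Next I would invoke Theorem \ref{t101} to replace $\pi(x)$ by the upper bound $A(x)$ given on the right of \eqref{103}. It then suffices to verify the deterministic inequality $A(x)\cdot D(\log x) \leq x$ for $x \geq e^{3.804}$. Setting $t = 1/\log x \in (0,1/3.804]$, this reduces to a polynomial inequality $F(t) \leq 0$, where $F(t) := (A(x)/x)\cdot D(1/t) - 1$ is a genuine polynomial in $t$ (the pole at $t=0$ in $D(1/t)$ is killed by the factor $t$ in $A(x)/x$). The decimal coefficients $3.35,\,12.65,\,71.7,\,466.1275,\,3489.8225$ in $D$ are calibrated precisely so that, after expanding and collecting, the coefficients of $t^0,\,t^1,\,\dots,\,t^6$ of $F$ all vanish. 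Thus $F(t) = t^7 G(t)$ for an explicit degree-seven polynomial $G$ with constant term $G(0)=1687.9$ and remaining coefficients large and negative.

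The main obstacle is verifying $G(t) \leq 0$ on the interval $(0,1/3.804]$. Near the upper endpoint, the large negative higher-order coefficients of $G$ comfortably overwhelm the positive constant $1687.9$, and a direct evaluation (together with monotonicity of $G'$, which can be checked by a finite number of sign tests) extends this negativity to all $t$ bounded away from $0$; this handles \eqref{106} in the regime of moderate $x$ using Theorem \ref{t101} alone. Near $t = 0$, however, the leading constant $1687.9$ dominates and Theorem \ref{t101} is not quite sharp enough to close the argument, reflecting the fact that the coefficient $6801.4$ of $x/\log^8 x$ in \eqref{103} exceeds the corresponding asymptotic coefficient $5113.5$ obtained by expanding $x/D(\log x)$. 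This regime is closed either by invoking a sharpened version of Theorem \ref{t101} valid for $x$ above a sufficiently large explicit threshold (the coefficient on $x/\log^8 x$ can be pushed below $5113.5$ for $x$ large), which makes $G(0)$ non-positive, or by exploiting the explicit slack between $\pi(x)$ and $A(x)$ provided by known bounds on $\theta(x)$. Combining the two regimes yields \eqref{106}.
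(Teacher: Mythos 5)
Your reduction is set up correctly: with $D(y)$ the denominator, $D(\log x)>0$ for $x\ge e^{3.804}$, and the coefficients of $D$ are indeed calibrated so that $\bigl(A(x)/x\bigr)D(\log x)-1 = t^{7}G(t)$ with $t=1/\log x$, $G(0)=1687.9$ and all other coefficients of $G$ negative (the next one is about $-13125$). But this is exactly why the argument does not prove the theorem: since $G(0)>0$ and $G$ is decreasing, $G(t)\le 0$ only for $t$ above a crossover value near $0.1$, i.e.\ Theorem \ref{t101} implies \eqref{106} only on a \emph{bounded} initial segment, roughly $e^{3.804}\le x\lesssim e^{9.5}$. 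The entire unbounded range $x\gtrsim 10^{4}$ --- which is the substance of the theorem --- is relegated to your closing ``either/or'' remark, and neither alternative is a proof. A sharpened version of Theorem \ref{t101} with the $x/\log^{8}x$ coefficient pushed below $5113.5$ for large $x$ is not available (and establishing it would require exactly the $\theta$-based machinery you are trying to avoid), while ``exploiting the slack via known bounds on $\theta(x)$'' is not a step but the whole remaining problem.

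Concretely, the paper closes this regime as follows: for $x\ge 10^{14}$ it works with the Rosser--Schoenfeld function $J_{3,0.35,x_1}$, using Proposition \ref{prop303} ($|\theta(x)-x|<0.35x/\log^{3}x$, valid only for $x\ge e^{30}$), a derivative comparison $\xi'(x)\ge J'_{3,0.35,x_1}(x)$, and explicit values of $\pi(10^{14})$ and $\theta(10^{14})$ to anchor the inequality at $x_1=10^{14}$; for the intermediate range (from $140000$ up to $10^{14}$), where the $\theta$-bound is not applicable and Theorem \ref{t101} is too weak, it compares $\xi(x)$ with $\operatorname{li}(x)$ and invokes Kotnik's bound $\Xi>10^{14}$ (Proposition \ref{prop201}) to convert $\xi(x)>\operatorname{li}(x)$ into $\xi(x)>\pi(x)$; below that it uses monotonicity of $\xi$ plus finite computer verification. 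None of these ingredients (the $\theta$-anchoring at $10^{14}$, the $\operatorname{li}(x)$ comparison, the Skewes-type lower bound) appears in your proposal, so as written the proof has a genuine gap precisely where the theorem is hardest.
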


\begin{thm} \label{t104}
If $x \geq 1332479531$, then
\begin{equation} \label{107}
\pi(x) > \frac{x}{\log x - 1 - \frac{1}{\log x} - \frac{2.65}{\log^2x} - \frac{13.35}{\log^3x} - \frac{70.3}{\log^4 x} - \frac{455.6275}{\log^5 x} -
\frac{3404.4225}{\log^6 x}}.
\end{equation}
\end{thm}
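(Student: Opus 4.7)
\emph{Proof plan.} The plan is to argue analogously to the deduction of Theorem~\ref{t103} from Theorem~\ref{t101}, now using Theorem~\ref{t102} as the input lower bound. Write $t=\log x$ and let $L(x)=x/D(x)$ denote the right-hand side of \eqref{107}, where
\[
D(x)=t-1-\frac{1}{t}-\frac{2.65}{t^{2}}-\frac{13.35}{t^{3}}-\frac{70.3}{t^{4}}-\frac{455.6275}{t^{5}}-\frac{3404.4225}{t^{6}}.
\]
For $x\ge 1332479531$ one has $t\ge \log(1332479531)\approx 21$, so $D(x)>0$, and hence the inequality $\pi(x)>L(x)$ is equivalent to $\pi(x)D(x)>x$.

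The key observation is that the constants $2.65,\,13.35,\,70.3,\,455.6275,\,3404.4225$ have been tuned so that the geometric expansion
\[
\frac{1}{D(x)}=\frac{1}{t}\sum_{n\ge 0}\Bigl(1-\frac{D(x)}{t}\Bigr)^{n}=\sum_{k\ge 1}\frac{\ell_{k}}{t^{k}}
\]
has Taylor coefficients $(\ell_{1},\ldots,\ell_{8})=(1,1,2,5.65,23.65,118.25,709.5,4966.5)$ which match exactly the constants appearing in Theorem~\ref{t102}, with $\ell_{9}=11017.9625>0$ the next one. This is a direct check via the Cauchy-product recurrence $\ell_{k}=\ell_{k-1}+\sum_{j=1}^{\min(6,\,k-1)}c_{j}\ell_{k-1-j}$, with $(c_{1},\ldots,c_{6})=(1,2.65,13.35,70.3,455.6275,3404.4225)$. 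Denoting the lower bound furnished by Theorem~\ref{t102} by $S(x)$, one then obtains the identity $L(x)-S(x)=x\sum_{k\ge 9}\ell_{k}/t^{k}$, whose right-hand side is a tail of a strictly positive series; I would bound it above by an explicit multiple of $x/t^{9}$, using that the ratios $\ell_{k+1}/(t\ell_{k})$ stay uniformly small throughout the range $t\ge 21$.

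Having reduced to controlling $\pi(x)-L(x)=(\pi(x)-S(x))-(L(x)-S(x))$, the plan splits into two pieces. Theorem~\ref{t102} furnishes $\pi(x)>S(x)$ on the entire range of interest (since $1332479531>1332450001$), but this alone is not sufficient, because $L(x)-S(x)>0$ for large $t$. I therefore need a strengthening of the form $\pi(x)>S(x)+C\,x/\log^{9}x$ with $C>11017.9625$, valid for $x\ge X_{0}$ with some explicit, not-too-large $X_{0}$. This I would obtain by reopening the argument leading to Theorem~\ref{t102}---the chain $\pi(x)=\vartheta(x)/\log x+\int_{2}^{x}\vartheta(t)/(t\log^{2}t)\,dt$ combined with explicit estimates for $\vartheta(x)-x$---and retaining one additional term in the repeated integration-by-parts expansion. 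Combined with the tail bound on $L(x)-S(x)$, this yields $\pi(x)>L(x)$ for $x\ge X_{0}$; the residual range $1332479531\le x<X_{0}$ is then handled by direct numerical verification using tabulated values of $\pi(x)$.

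The main obstacle I anticipate lies in this last calibration: the strengthened asymptotic must simultaneously supply a constant $C>11017.9625$ in front of $x/\log^{9}x$ \emph{and} kick in from a threshold $X_{0}$ small enough that the remaining finite range stays computationally tractable. Matching the precise fractional constants appearing in \eqref{107} and the specific numerical threshold $1332479531$ is then a careful bookkeeping exercise once the explicit bound on $\vartheta(x)-x$ is fixed.
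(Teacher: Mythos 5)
Your expansion bookkeeping is correct: the Taylor coefficients of $x/D(x)$ in powers of $1/\log x$ are indeed $1,1,2,5.65,23.65,118.25,709.5,4966.5$ with next coefficient $11017.9625$, and you are also right that $\pi(x)>S(x)$ from Theorem \ref{t102} cannot by itself give \eqref{107}, since the right-hand side of \eqref{107} exceeds $S(x)$ by a positive tail of order $x/\log^{9}x$. But this is precisely where the proposal stops being a proof: the entire content of the theorem is concentrated in the ``strengthening'' $\pi(x)>S(x)+Cx/\log^{9}x$ with an explicit $C>11017.9625$ (in fact $C$ must dominate the whole tail sum, not just its first term) from an explicit threshold $X_{0}$, and you only announce that you would obtain it by ``reopening'' the $\theta$-based argument. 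That step is exactly the hard quantitative work, and your calibration worry is real: the sharp bound $|\theta(x)-x|<0.35x/\log^{3}x$ is only available for $x\geq e^{30}\approx 10^{13}$, so any $X_{0}$ produced this way sits near $10^{14}$, leaving a range $1332479531\leq x<X_{0}$ far too long to close by checking $\pi$ at every prime; one needs intermediate devices (the paper uses the bound with $\eta_{2}=0.01$ on $[8\cdot 10^{9},10^{14}]$ and monotonicity plus a prime-by-prime check only below $8\cdot 10^{9}$), none of which appear in your plan.

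There is also a structural problem relative to the paper: the logical arrow goes the other way. The paper proves Theorem \ref{t104} \emph{directly} from Proposition \ref{prop301} (the identity $\pi(x)=\theta(x)/\log x+\int_{2}^{x}\theta(t)/(t\log^{2}t)\,dt$), Dusart's $\theta$-estimates (Propositions \ref{prop302}, \ref{prop303}) and the comparison functions $J_{k,\pm\eta_{k},x_{1}(k)}$ of Proposition \ref{prop304}, split over the ranges $x\geq 10^{14}$, $8\cdot 10^{9}\leq x\leq 10^{14}$, and a computed low range; Theorem \ref{t102} is then \emph{deduced from} Theorem \ref{t104} via exactly the polynomial identity $y^{13}R(y)S(y)=y^{14}-T(y)$ you rediscovered. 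So inside this paper your route is circular unless you independently prove the strengthened version of Theorem \ref{t102}, and doing that is essentially the paper's direct proof of Theorem \ref{t104} itself. In short: the coefficient analysis is a nice consistency check (it is the reverse of the paper's derivation of Theorem \ref{t102}), but the proposal leaves the decisive explicit estimate unproven and hence has a genuine gap.
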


\noindent
As an application of these estimates, we obtain the following result concerning the existence of a prime number in a small interval.

\begin{thm} \label{t105}
For every $x \geq 58837$ there is a prime number $p$ such that
\begin{displaymath}
x < p \leq x \left( 1 + \frac{1.1817}{\log^3 x} \right).
\end{displaymath}
\end{thm}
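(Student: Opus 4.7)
The plan is to combine the upper bound of Theorem~\ref{t101} with the lower bound of Theorem~\ref{t102}, applied to $x$ and to $y := x(1 + 1.1817/\log^3 x)$ respectively, and to prove that $\pi(y) > \pi(x)$ for every $x \geq 58837$. Since $\pi(y) - \pi(x)$ counts the primes in $(x, y]$, any such $x$ then admits a prime in the required interval.

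For $x$ large enough that $y \geq 1332450001$, both Theorems~\ref{t101} and~\ref{t102} apply, so it suffices to show that the lower bound $L(y)$ from Theorem~\ref{t102} exceeds the upper bound $U(x)$ from Theorem~\ref{t101}. Using $\log y = \log x + \log(1 + 1.1817/\log^3 x)$, one expands each summand $b_k y/\log^k y$ of $L(y)$ as a series in $1/\log x$. The main contributions to $L(y) - U(x)$ at order $x/\log^4 x$ are the gain $1.1817\,x/\log^4 x$, coming from the first-term expansion $y/\log y - x/\log x \approx (y-x)/\log x$, and the loss $-(6.35 - 5.65)\,x/\log^4 x = -0.7\,x/\log^4 x$ coming from the coefficient slack between the two theorems at the fourth summand. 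This yields a net leading term $0.4817\,x/\log^4 x$. A direct calculation shows that the corrections at orders $x/\log^k x$ for $k \geq 5$, arising both from the coefficient slacks $0.7$, $3.5$, $21$, $1834.9$ and from the Taylor expansion of $1/\log^k y$ around $1/\log^k x$, contribute only terms of order $x/\log^5 x$ and smaller, all of which are dominated by the main term once $\log x$ is sufficiently large, say once $x \geq x_0$ for an explicit $x_0$ not much beyond $1.33 \cdot 10^9$.

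For the remaining finite range $58837 \leq x < x_0$ one appeals to a direct computation using a table of consecutive primes. For each pair of consecutive primes $p_n < p_{n+1}$ with $p_n \geq 58837$, it suffices to verify $p_{n+1} \leq p_n (1 + 1.1817/\log^3 p_n)$: since the map $x \mapsto x(1 + 1.1817/\log^3 x)$ is monotone increasing in this range, $p_{n+1}$ then lies in $(x, x(1 + 1.1817/\log^3 x)]$ for every $x \in [p_n, p_{n+1})$.

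The main obstacle is the quantitative bookkeeping in the asymptotic step: one must keep track of all eight terms in $L(y)$ after substituting $\log y = \log x + \log(1 + 1.1817/\log^3 x)$, with sufficient precision to ensure that every negative correction is dominated by $0.4817\,x/\log^4 x$ for an explicit $x_0$ small enough to keep the prime-gap verification for $x < x_0$ tractable. The constant $1.1817$ and the threshold $58837$ appearing in the statement are presumably calibrated so that the asymptotic estimate and the computational check succeed jointly at precisely this boundary.
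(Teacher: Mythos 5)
Your proposal is correct in outline, but it is a genuinely different route from the paper's. The paper never compares Theorems~\ref{t101} and~\ref{t102} directly: it works instead with the Panaitopol-type bounds of Corollaries~\ref{kor309} and~\ref{kor311} with $a=2.65$, $b=3.83$, fed into Lemma~\ref{lem401}. There the decisive margin is $1.1817-(3.83-2.65)=0.0017$, which is so thin that the analytic inequality (via the function $g$ with leading term $0.0017x^2$) only takes over at $x\geq e^{1423.728}$; the enormous intermediate range is then bridged by the external short-interval results of Ramar\'e--Saouter, Kadiri--Lumley and Trudgian (Propositions~\ref{p402}--\ref{p404}), and the computer check runs only up to about $2898239$. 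Your route trades these ingredients: with the eight-term bounds the leading margin is $0.4817\,x/\log^4 x$, and the negative corrections (the coefficient slacks $0.7,3.5,21,1834.9$ at orders $5$ through $8$ plus the Taylor losses of size roughly $k\cdot 1.1817\,b_k x/\log^{k+4}x$) sum to well under $0.06\cdot x/\log^4 x$ already at $\log x\approx 21$, so $L(y)>U(x)$ holds as soon as Theorem~\ref{t102} applies to $y$, i.e.\ for all $x\geq 1332450001$. What you gain is independence from Propositions~\ref{p402}--\ref{p404}; what you pay is a prime-gap verification up to about $1.33\cdot 10^{9}$ (some $6\cdot 10^{7}$ gaps) instead of up to $2898239$ --- still entirely feasible, and comparable in scale to computations the paper already performs (the proof of Theorem~\ref{t104} checks all primes up to $8\cdot 10^{9}$).

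One small point to patch: your reduction to gaps of consecutive primes $p_n\geq 58837$ only covers $x\geq 58889$, since $58889$ is the least prime exceeding $58837$. The window $58837\leq x<58889$ needs the separate boundary observation that $58837\left(1+\frac{1.1817}{\log^3 58837}\right)\geq 58889$, which is exactly how the paper finishes; note that the analogous gap inequality at the prime immediately preceding $58889$ actually fails, which is why $58837$ is the true threshold and why this last sliver cannot be absorbed into the prime-by-prime check.
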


\section{Skewes' number}

One of the first estimates for $\pi(x)$ is due to Gauss. In 1793, he computed that $\pi(x) < \text{li}(x)$ holds for every $2 \leq x \leq 3000000$ and
conjectured that $\pi(x) < \text{li}(x)$ holds for every $x \geq 2$. However, in 1914, Littlewood \cite{litt} proved, that $\pi(x) - \text{li}(x)$ changes
the sign infinitely many times by showing that there is a positive constant $K$ such that the sets
\begin{displaymath}
\left \{ x \geq 2 \mid \pi(x) - \tl(x) > \frac{K\sqrt{x}\log \log \log x}{\log x} \right \}
\end{displaymath}
and
\begin{displaymath}
\left \{ x \geq 2 \mid \pi(x) - \tl(x) < - \frac{K\sqrt{x}\log \log \log x}{\log x} \right \}
\end{displaymath}
are not empty and unbounded. However, Littlewood's proof is non-constructive and up to now no $x\geq 2$ is known such that $\pi(x) > \text{li}(x)$ holds.
Let $\Xi = \min \{ x \in \R_{\geq 2} \mid \pi(x) > \text{li}(x) \}$. The first upper bound for $\Xi$, which was proved without the assumption of Riemann's
hypothesis, is due to Skewes \cite{skew} in 1955, namely
\begin{displaymath}
\Xi < 10^{10^{10^{963}}}.
\end{displaymath}
The number on the right hand side is known in the literature as the \emph{Skewes' number}. In 1966, Lehman \cite{leh} improved this upper bound considerably
by showing that $\Xi < 1.65 \cdot 10^{1165}$. After some improvements the current best upper bound,
\begin{displaymath}
\Xi < e^{727.951336105} \leq 1.398 \cdot 10^{316},
\end{displaymath}
was proved by Saouter, Trudgian and Demichel \cite{std}. A lower bound is given by the calculation of Gauss, namely $\Xi > 3000000$. In 2008, Kotnik
\cite{ko} proved the following

\begin{prop} \label{prop201}
We have $\Xi > 10^{14}$.
\end{prop}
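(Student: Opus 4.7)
The plan is to reduce $\Xi > 10^{14}$ to a finite numerical check. Since $\pi$ is non-decreasing on $\R$ and $\tl$ is continuous and strictly increasing on $(1,\infty)$, whenever $2 = x_0 < x_1 < \ldots < x_N$ is a partition with $x_N \geq 10^{14}$, every $x \in [x_i, x_{i+1}]$ satisfies $\pi(x) \leq \pi(x_{i+1})$ and $\tl(x) \geq \tl(x_i)$. Consequently, the single inequality
\[
\pi(x_{i+1}) < \tl(x_i)
\]
forces $\pi(x) < \tl(x)$ on the whole slab $[x_i, x_{i+1}]$, and the proof of Proposition \ref{prop201} reduces to checking these $N$ inequalities. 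In particular, one does not need to visit every prime up to $10^{14}$ individually.

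The mesh has to be chosen fine enough that the buffer $\tl(x_i) - \pi(x_{i+1})$ remains positive at every node. This is easy because empirically $\tl(x) - \pi(x)$ is of order $\sqrt{x}/\log x$ throughout $[2, 10^{14}]$, leaving a comfortable margin of order $10^{5}$ at the top end of the range. A rather coarse partition therefore suffices, and a standard explicit upper bound of the type $\pi(x_{i+1}) - \pi(x_i) \leq C(x_{i+1}-x_i)/\log x_i$ (in the spirit of Rosser--Schoenfeld) quantifies the necessary spacing so that no slab can accidentally be bad.

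The real obstacle is the exact evaluation of $\pi$ at the large grid points, which at scale $10^{14}$ is out of reach of direct sieving. The classical remedy is the combinatorial Meissel--Lehmer identity, in its modern Lagarias--Miller--Odlyzko and Del\'eglise--Rivat refinements, which compute $\pi(x)$ in time and space essentially $O(x^{2/3})$; this makes each $\pi(x_i)$ tractable even at scale $10^{14}$. On the analytic side, $\tl(x_i)$ must be produced together with a certified lower bound, e.g.\ by applying interval arithmetic to the convergent power-series expansion of the logarithmic integral, so that the comparison $\pi(x_{i+1}) < \tl(x_i)$ is numerically rigorous rather than merely suggestive. Combining these two ingredients with a mesh dense enough to handle the worst-case local behaviour of $\pi$ yields $\Xi > 10^{14}$ as claimed.
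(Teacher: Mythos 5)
The paper does not prove this proposition at all: it is stated as Kotnik's computational result (with the remark recording the later improvement $\Xi > 1.39\cdot 10^{17}$ of Platt and Trudgian), so the ``paper's proof'' is simply the citation, and what you have written is an attempt to reconstruct the underlying verification. Your logical reduction is correct: since $\pi$ is non-decreasing and $\tl$ is strictly increasing, the node inequality $\pi(x_{i+1}) < \tl(x_i)$ does force $\pi(x) < \tl(x)$ on all of $[x_i,x_{i+1}]$, so finitely many certified comparisons up to $10^{14}$ would indeed give $\Xi > 10^{14}$, and this slab-plus-exact-$\pi$ scheme is in the same spirit as what the cited computations actually do.

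Nevertheless, as it stands the proposal is a programme for a computation rather than a proof: the proposition \emph{is} the assertion that this computation has been carried out, and none of it is performed or certified here. Two of your supporting claims are also shaky. First, the statement that $\tl(x)-\pi(x)$ is of order $\sqrt{x}/\log x$ throughout $[2,10^{14}]$ (about $3.1\cdot 10^{5}$ at $x=10^{14}$) is exactly the kind of a posteriori information the computation itself produces; it cannot be assumed in advance to justify a ``rather coarse'' mesh, and near the bottom it fails outright (the margin is about $0.045$ at $x=2$, so there the check is essentially prime-by-prime). Second, the margin near $10^{14}$ limits slab widths to roughly $10^{6}$--$10^{7}$, i.e.\ on the order of $10^{7}$ exact values of $\pi$ would be needed, so ``one Meissel--Lehmer/Del\'eglise--Rivat evaluation per node'' is not a realistic description of the verification; in practice one combines sparse exactly computed checkpoints with contiguous sieving (or tracks $\pi$ exactly across the whole range), which is how results of this type are actually obtained. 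So the skeleton is right, but the decisive content is missing, and for the purposes of this paper the correct justification of Proposition \ref{prop201} remains the reference to Kotnik.
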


\begin{rema}
Recently, Platt and Trudgian \cite{pltr} improved Proposition \ref{prop201} by showing $\Xi > 1.39 \cdot 10^{17}$.
\end{rema}

\section{New estimates for $\pi(x)$}

Since there is no efficient algorithm for computing $\pi(x)$, we are interested in upper and lower bounds for $\pi(x)$. Up to now the sharpest estimates for
$\pi(x)$ are due to Dusart \cite{pd4}. In 2010, he proved that the inequality
\begin{equation} \label{308}
\pi(x) \leq \frac{x}{\log x} + \frac{x}{\log^2 x} + \frac{2.334x}{\log^3 x}
\end{equation}
holds for every $x \geq 2953652287$ and that
\begin{equation} \label{309}
\pi(x) \geq \frac{x}{\log x} + \frac{x}{\log^2 x} + \frac{2x}{\log^3 x}
\end{equation}
for every $x \geq 88783$. To find new estimates, we consider the so-called Chebyshev-function
\begin{displaymath}
\theta(x)= \sum_{p \leq x} \log p.
\end{displaymath}

\noindent
The following relation between $\pi(x)$ and $\theta(x)$ is well-known.

\begin{prop} \label{prop301}
If $x \geq 2$, then
\begin{equation} \label{310}
\pi(x) = \frac{\theta(x)}{\log x} + \int_{2}^{x}{\frac{\theta(t)}{t \log^{2} t}\ dt}.
\end{equation}
\end{prop}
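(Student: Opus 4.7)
The plan is to derive \eqref{310} by a direct application of Abel's summation formula. First I would rewrite
\[
\pi(x) \;=\; \sum_{p \leq x} 1 \;=\; \sum_{p \leq x} \frac{\log p}{\log p},
\]
which is legitimate because every prime $p \leq x$ satisfies $\log p > 0$. I would then encode the summand as the arithmetic function $a(n) = \log p$ if $n = p$ is prime and $a(n) = 0$ otherwise, so that its summatory function $A(t) := \sum_{n \leq t} a(n)$ is precisely $\theta(t)$, and I would use the $C^{1}$ weight $f(t) = 1/\log t$ on $(1, \infty)$.

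Next, I would apply Abel's summation on the interval $[y, x]$ with $y \in (1, 2)$, say $y = 3/2$, to obtain
\[
\pi(x) \;=\; \sum_{y < n \leq x} a(n) f(n) \;=\; \theta(x) f(x) - \theta(y) f(y) - \int_{y}^{x} \theta(t) f'(t)\, dt.
\]
The boundary term at $y$ vanishes because $\theta(y) = 0$ (there are no primes below $2$), and the same observation shows that the integrand vanishes on $[y, 2]$, so the integral may be taken from $2$ to $x$. Substituting $f'(t) = -1/(t \log^{2} t)$ converts the minus sign in front of the integral into a plus, giving exactly \eqref{310}.

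I do not expect any substantive obstacle here; this is a routine partial-summation identity. The only point that requires a little care is choosing the lower endpoint just below $2$ so that the boundary contribution drops and the effective lower limit of the integral becomes $2$, which is automatic once one uses that $\theta$ is supported on $[2, \infty)$.
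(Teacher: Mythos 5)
Your proof is correct: the choice $a(n)=\log p$ for $n=p$ prime (so $A(t)=\theta(t)$), the weight $f(t)=1/\log t$, and Abel summation from a point in $(1,2)$ up to $x$, using that $\theta$ vanishes below $2$, is exactly the standard argument. The paper simply cites Apostol (Theorem 4.3) for this identity, and that reference proves it by the same partial-summation computation you give, so your route coincides with the paper's.
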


\begin{proof}
See Apostol \cite[Theorem 4.3]{ap}.
\end{proof}

\noindent
Before we give our first new estimate for $\pi(x)$, we mention a result \cite{pd4} about the distance between $x$ and $\theta(x)$, which plays an important
role below.

\begin{prop} \label{prop302}
Let $k \in \mathds{N}$ with $k \leq 4$. Then for every $x \geq x_0(k)$,
\begin{equation} \label{311}
\vert \theta(x) - x \vert < \frac{\eta_k x}{\log^{k} x},
\end{equation}
where
\begin{center}
\begin{tabular}{|l||c|c|c|c|c|c|c|c|c|}
\hline
$k$     & $       1 $ & $         2$ & $         3$ & $4$  \\ \hline
$\eta_k$& $   0.001 $ & $      0.01$ & $      0.78$ & $1300$  \\ \hline
$x_0(k)$\rule{0mm}{4mm}& $908994923$ & $7713133853$ & $158822621$ & $2$ \\ \hline
\end{tabular}.
\end{center}
\end{prop}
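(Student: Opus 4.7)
The plan is to bound $|\theta(x)-x|$ for each $k\in\{1,2,3,4\}$ by pairing an analytic estimate that settles all sufficiently large $x$ with a direct numerical verification on the complementary finite range. Since $\theta$ is awkward to handle via contour methods, I would first transfer everything to the Chebyshev function $\psi(x)=\sum_{p^{m}\leq x}\log p$ through the identity $\psi(x)-\theta(x)=\sum_{m\geq 2}\theta(x^{1/m})=O(\sqrt{x}\log x)$; this error is negligible against $x/\log^{k}x$ for any fixed $k$, so it suffices to bound $|\psi(x)-x|$ with constants very slightly smaller than $\eta_{k}$.

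For the analytic step I would invoke the explicit formula
\[
\psi(x)=x-\sum_{\rho}\frac{x^{\rho}}{\rho}-\log(2\pi)-\tfrac{1}{2}\log(1-x^{-2}),
\]
the sum being over the non-trivial zeros $\rho=\beta+i\gamma$ of $\zeta$, truncated at a height $T=T(x)$. On the low part $|\gamma|\leq H$ one uses a numerical verification of the Riemann hypothesis up to the explicit height $H$ (so that $\beta=1/2$ and the contribution is controlled by $\sqrt{x}\sum_{|\gamma|\leq H}1/|\gamma|$), while on the range $H<|\gamma|\leq T$ one applies a Korobov--Vinogradov zero-free region of the form $\beta<1-c/(\log|\gamma|)^{2/3}(\log\log|\gamma|)^{1/3}$ with explicit constant. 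Optimizing $T$ in terms of $x$ and integrating the standard Riemann--von Mangoldt density $N(T)\ll T\log T$ against the zero-free region yields a bound $|\psi(x)-x|\leq c(x)\cdot x$ where $c(x)$ decays faster than any power of $1/\log x$, so that $c(x)\leq \eta_{k}/\log^{k}x$ holds from some threshold $x_{1}(k)$ on.

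For the computational step, on $[x_{0}(k),x_{1}(k)]$ one verifies \eqref{311} by brute force. The function $\theta(x)-x$ has slope $-1$ between consecutive primes and a positive jump $\log p$ at each prime $p$, so the ratio $|\theta(x)-x|\log^{k}x/x$ attains its extrema only at primes $p\in[x_{0}(k),x_{1}(k)]$ or immediately to the left of them; together with the fact that the envelope $\log^{k}x/x$ is monotone on the relevant range, this reduces the verification to a single pass over a prime table with an incremental update of $\theta(p)=\theta(p_{-})+\log p$. Existing sieve tables easily cover the ranges occurring for $k=2,3,4$, and the case $k=1$ (where $x_{0}(1)\approx 9\cdot 10^{8}$) is also within reach.

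The main obstacle is the sharpness of the constants, especially $\eta_{1}=0.001$ already from $x\geq 908994923$ and $\eta_{2}=0.01$ from $x\geq 7713133853$. Achieving $c(x)\leq 0.001/\log x$ at $x\approx 10^{9}$ purely from the explicit formula demands both a strong zero-free region with fully explicit constants and a verification of RH up to a very large height $H$, and the parameters $T$ and $H$ must be delicately balanced so that the analytic threshold $x_{1}(k)$ is small enough to be bridged to $x_{0}(k)$ by the numerical check. These are exactly the ingredients assembled in Dusart \cite{pd4}; I would consequently expect the bulk of the work to lie in the careful bookkeeping of constants in the explicit-formula argument rather than in the final prime-by-prime tabulation.
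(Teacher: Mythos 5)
The paper does not actually prove this proposition: its ``proof'' is the single line ``See Dusart \cite{pd4}'', the table of pairs $(\eta_k,x_0(k))$ being quoted directly from Dusart's work. Your proposal is therefore best read as a reconstruction of what lies behind that citation, and as a roadmap it is faithful to it: bounds of this type are indeed obtained by passing from $\theta$ to $\psi$, applying the Rosser--Schoenfeld explicit-formula framework with a numerical verification of RH up to an explicit height plus an explicit zero-free region, and closing the remaining finite range by a computation over primes exactly as you describe (checking $\theta(p_i)-p_i$ and $p_{i+1}-\theta(p_i)$ against the monotone envelope $\eta_k x/\log^k x$).

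The gap, if your text is meant as a standalone proof rather than a citation, is quantitative: the statement is precisely the table of constants, and your argument never produces them. The explicit-formula step as you set it up only shows that \emph{some} admissible pair $(\eta_k,x_1(k))$ exists for each $k$; that one may take $\eta_1=0.001$ already from $x\ge 908994923$, $\eta_2=0.01$ from $7713133853$, $\eta_3=0.78$ from $158822621$, and $\eta_4=1300$ from $x\ge 2$ depends entirely on the explicit constants, the height of the RH verification, and the large-scale $\theta$-computations that you defer to Dusart --- so either one cites \cite{pd4} as the paper does, or the ``bookkeeping'' you postpone is the whole proof. Two technical points also need care. The Korobov--Vinogradov region is neither used nor needed here; the classical explicit zero-free region already gives $|\psi(x)-x|\ll x\exp(-c\sqrt{\log x})$, which beats every power of $1/\log x$. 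And the transfer term $\psi(x)-\theta(x)\approx\sqrt{x}$ is \emph{not} negligible near $x_0(1)$: at $x\approx 9\cdot 10^8$ it is about $3\cdot 10^4$ against a target of about $4.4\cdot 10^4$, so it must be controlled by explicit inequalities and the crossover between the analytic range and the computational range chosen so that the analytic bound is only invoked where the loss is genuinely harmless.
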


\begin{proof}
See Dusart \cite{pd4}.
\end{proof}

\vspace*{1mm}
\noindent
By using Table 6.4 \& Table 6.5 from \cite{pd4}, we obtain the following result.

\begin{prop} \label{prop303}
If $x \geq e^{30}$, then
\begin{displaymath}
\vert \theta(x) - x \vert < \frac{0.35 x}{\log^3 x}.
\end{displaymath}
\end{prop}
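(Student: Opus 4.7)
The plan is to reduce the inequality to a finite family of numerical checks supplied by Dusart's tables. Writing $F(x) := |\theta(x) - x|\log^3 x / x$, what must be shown is $F(x) < 0.35$ for all $x \geq e^{30}$. Tables 6.4 and 6.5 of \cite{pd4} provide, for each real $k$ in a discrete increasing list $30 = k_0 < k_1 < \cdots < k_N$, a constant $\varepsilon_{k_j}$ with $|\theta(x) - x| \leq \varepsilon_{k_j} x$ whenever $x \geq e^{k_j}$, and these $\varepsilon_{k_j}$ fall off rapidly in $j$.

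First I would partition $[e^{30},\infty)$ into the intervals $I_j = [e^{k_j}, e^{k_{j+1}}]$ for $0 \leq j \leq N-1$ together with the tail $T := [e^{k_N},\infty)$. On $I_j$, the tabulated bound and monotonicity of $\log^3 x$ give
\[
F(x) \;\leq\; \varepsilon_{k_j}\,\log^3 x \;\leq\; \varepsilon_{k_j}\,k_{j+1}^3,
\]
so the proposition on $I_j$ reduces to a single numerical inequality $\varepsilon_{k_j}\,k_{j+1}^3 < 0.35$. The delicate case is $j=0$: since $k_1^3$ is at least $30^3 = 27000$, $\varepsilon_{k_0}$ must be of order at most $1.3 \times 10^{-5}$, and this is precisely the order of magnitude tabulated in \cite{pd4} at $k=30$---which, in effect, is what dictates the choice of starting point $e^{30}$ in the statement. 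For subsequent intervals the product drops off quickly and the inequality holds with considerable slack.

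On the tail $T$ I would appeal to an explicit Prime-Number-Theorem-style bound of the shape $|\theta(x) - x| \leq Cx \exp(-c\sqrt{\log x})$ recorded in \cite{pd4}; since this decays faster than any negative power of $\log x$, one has $F(x) \to 0$ as $x \to \infty$, so the bound $F(x) < 0.35$ holds automatically once $\log x$ is large enough, and in fact the crossover already lies well inside the tabulated range. Alternatively, one can simply reuse the final table entry, which remains valid for all $x \geq e^{k_N}$.

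The main obstacle is purely the bookkeeping near $x = e^{30}$: one has to choose the partition finely enough that $\varepsilon_{k_j}\,k_{j+1}^3 < 0.35$ is met on \emph{every} interval, and in particular that the first table entry available at $k=30$ really is small enough to survive multiplication by $27000$. Past the first few intervals the rapid decay of $\varepsilon_{k_j}$ dominates the polynomial growth of $k_{j+1}^3$, so every further check is immediate.
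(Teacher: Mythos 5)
Your overall plan---cut $[e^{30},\infty)$ at Dusart's table points, bound $|\theta(x)-x|\log^3x/x$ on $[e^{k_j},e^{k_{j+1}}]$ by $\varepsilon_{k_j}k_{j+1}^3$, and check each product against $0.35$---is indeed how Tables 6.4 and 6.5 of \cite{pd4} are exploited, and your remark that the entry at $k=30$ is what fixes the threshold $e^{30}$ is fine. The genuine gap is in your treatment of the tail, which is exactly where the paper's proof does all of its work. Your fallback option is simply wrong: reusing the last table entry $\varepsilon_{k_N}$ for all $x\ge e^{k_N}$ cannot prove the proposition, because a bound $|\theta(x)-x|\le\varepsilon x$ with a \emph{fixed} $\varepsilon>0$ is eventually weaker than $0.35x/\log^3x$; no constant-$\varepsilon$ estimate yields a $1/\log^3$ bound on an unbounded range. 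Your main option---that the explicit bound $|\theta(x)-x|\le Cx\exp(-c\sqrt{\log x})$ already beats $0.35/\log^3x$ ``well inside the tabulated range''---is asserted rather than verified, and with the explicit constants actually available in \cite{pd4} it does not comfortably precede the end of the tables: there remains a window just past $\log x\approx 3600$ that neither the tabulated constants nor the smooth asymptotic bound covers. You also misplace the delicate region: the check at $j=0$ is tight but routine, whereas near the top of the tables the margin is razor-thin, and the claim that ``every further check is immediate'' is an overstatement.

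That uncovered window is precisely the content of the paper's proof: with $a=3600$ it bounds $\theta$ through $\psi$ via $\psi(x)-\theta(x)\le 1.00007\,x^{1/2}+1.78\,x^{1/3}$ and verifies numerically that
\begin{displaymath}
\frac{1.00007\,(a+i)^3}{\sqrt{e^{a+i}}}+\frac{1.78\,(a+i)^3}{(e^{a+i})^{2/3}}+\varepsilon_{\psi}\,(a+1+i)^3<0.35
\end{displaymath}
on each interval $[e^{a+i},e^{a+i+1}]$, $0\le i\le 100$; moreover, for $75\le i\le 100$ the published value $\varepsilon_{\psi}=6.93\cdot10^{-12}$ is not small enough, and a sharpened value $\varepsilon_{\psi}=6.49\cdot10^{-12}$, obtained from Dusart by private communication \cite{pd5}, is required. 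So the step your proposal waves through (the $\psi$-to-$\theta$ conversion on the range where the tables end, together with the improved $\varepsilon_{\psi}$) is the one piece of genuinely new input in the proof, and without it your argument does not close.
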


\begin{proof}
We set $a = 3600$ and $\e_{\psi} = 6.93 \cdot 10^{-12}$. Then we have
\begin{equation} \label{312}
\frac{1.00007 (a+i)^3}{\sqrt{e^{a+i}}} + \frac{1.78 (a+i)^3}{(e^{a+i})^{2/3}} + \e_{\psi}(a+1+i)^3 < 0.35
\end{equation}
for every $0\leq i \leq 75$. By \cite{pd5}, we can choose $\e_{\psi} = 6.49 \cdot 10^{-12}$ for every $e^{3675} \leq x \leq e^{3700}$, so that the
inequality \eqref{312} holds with $\e_{\psi} = 6.49 \cdot 10^{-12}$ for every $75\leq i \leq 100$ as well. It follows from Table 6.4 and Table 6.5 in
\cite{pd4} that we can choose $\eta_3 = 0.35$ and $x_0(3) = e^{30}$ in \eqref{311}.
\end{proof}

\noindent
Now, let $k\in \N$ with $k \leq 4$ and let $\eta_k, x_1(k)$ be such that the inequality
\begin{equation} \label{313}
\vert \theta(x) - x \vert < \frac{\eta_k x}{\log^{k} x}
\end{equation}
holds for every $x \geq x_1(k)$. To prove new estimates for $\pi(x)$, Rosser \& Schoenfeld \cite{rs} introduced the following function, which plays an
important role below as well.

\begin{defi}
For every $x > 1$, we define
\begin{equation} \label{314}
J_{k,\eta_k,x_1(k)}(x) = \pi(x_1(k)) - \frac{\theta(x_1(k))}{\log x_1(k)} + \frac{x}{\log x} + \frac{\eta_k x}{\log^{k+1} x} + \int_{x_1(k)}^{x}{\left(
\frac{1}{\log^{2} t} + \frac{\eta_k}{\log^{k+2} t} \ dt \right)}.
\end{equation}
\end{defi}

\begin{prop} \label{prop304}
If $x \geq x_1(k)$, then
\begin{equation} \label{315}
J_{k,-\eta_k,x_1(k)}(x) < \pi(x) < J_{k,\eta_k,x_1(k)}(x)
\end{equation}
\end{prop}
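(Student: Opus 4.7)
The plan is to combine Apostol's integral identity (Proposition \ref{prop301}) with the hypothesis \eqref{313} on $\vert\theta(x)-x\vert$ in a direct and essentially formal way. The definition of $J_{k,\eta_k,x_1(k)}(x)$ has been engineered precisely so that this substitution yields the desired inequalities, so no clever device should be needed.

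First I would apply \eqref{310} both at $x$ and at $x_1(k)$ and subtract, obtaining
\begin{displaymath}
\pi(x) \;=\; \pi(x_1(k)) \;-\; \frac{\theta(x_1(k))}{\log x_1(k)} \;+\; \frac{\theta(x)}{\log x} \;+\; \int_{x_1(k)}^{x}\frac{\theta(t)}{t\log^{2} t}\,dt.
\end{displaymath}
This rewriting isolates the two places where $\theta$ appears and where the bound \eqref{313} can be inserted, while the ``boundary'' constants $\pi(x_1(k))-\theta(x_1(k))/\log x_1(k)$ are absorbed cleanly into the definition of $J_{k,\eta_k,x_1(k)}(x)$.

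Next, for the upper bound, I would apply \eqref{313} in the form $\theta(t)<t+\eta_k t/\log^k t$ (valid for all $t\ge x_1(k)$) to the two remaining $\theta$-terms. This gives
\begin{displaymath}
\frac{\theta(x)}{\log x} \;<\; \frac{x}{\log x}+\frac{\eta_k x}{\log^{k+1} x},\qquad \frac{\theta(t)}{t\log^2 t} \;<\; \frac{1}{\log^2 t}+\frac{\eta_k}{\log^{k+2} t},
\end{displaymath}
and monotone integration on $[x_1(k),x]$ produces exactly $J_{k,\eta_k,x_1(k)}(x)$. The lower bound is completely symmetric: use $\theta(t)>t-\eta_k t/\log^k t$ instead, and the same calculation yields $J_{k,-\eta_k,x_1(k)}(x)<\pi(x)$.

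There is essentially no obstacle here beyond bookkeeping; the main point to be a little careful about is the direction of the integrand inequalities (integrating pointwise upper/lower bounds term by term) and the fact that \eqref{313} is assumed for all $t\ge x_1(k)$, which is exactly the range of integration. Once those are checked, \eqref{315} follows immediately from the identity above.
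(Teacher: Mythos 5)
Your proposal is correct and is exactly the argument the paper intends: its proof consists of the single line that the claim follows from \eqref{314}, \eqref{313} and \eqref{310}, and your computation (subtracting the Apostol identity at $x_1(k)$ from that at $x$, then inserting the bound \eqref{313} in the $\theta(x)/\log x$ term and under the integral over $[x_1(k),x]$) is precisely the bookkeeping being suppressed. No issues; the strictness even at $x=x_1(k)$ comes from the strict bound on $\theta(x)/\log x$, as your write-up implicitly uses.
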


\begin{proof}
The claim follows from \eqref{314}, \eqref{313} and \eqref{310}.
\end{proof}

\subsection{Some new upper bounds for $\pi(x)$}

We prove our first main result.

\begin{proof}[Proof of Theorem \ref{t101}]
We denote the term on the right hand side of \eqref{103} by $\delta(x)$ and set $\widehat \delta(x,y) = x \delta(y)/y$. Let $x_1 = 10^{14}$. We obtain
\begin{equation} \label{316}
\delta'(x) - J'_{3, 0.35,x_1}(x) = \frac{1687.9 \log x - 54411.2}{\log^9 x} \geq 0
\end{equation}
for every $x \geq x_1$. Since we have $\theta(x_1) \geq 99999990573246$ by \cite{pd4}, $\pi(x_1) = 3204941750802$ and $\log x_1 \leq 32.2362$, we obtain
\begin{equation} \label{317}
\pi(x_1) - \frac{\theta(x_1)}{\log x_1} \leq 102839438084.
\end{equation}
It follows that
\begin{displaymath}
\delta(x_1) - J_{3, 0.35,x_1}(x_1) \geq \widehat \delta(x_1, 32.2362) - J_{3, 0.35,x_1}(x_1) > 0.
\end{displaymath}
Using \eqref{315} und \eqref{316}, we obtain $\delta(x) > \pi(x)$ for every $x \geq x_1$.

We have
\begin{displaymath}
\delta'(x) - \text{li}'(x) = \frac{0.35 \log^5 x - 1.05 \log^4 x + 1687.9 \log x - 54411.2}{\log^9 x} \geq 0
\end{displaymath}
for every $x \geq 5 \cdot 10^5$. Using in addition $\delta(5 \cdot 10^5) - \text{li}(5 \cdot 10^5) \geq 2.4 > 0$ and Proposition \ref{prop201}, we get that
$\delta(x) > \pi(x)$ for every $5 \cdot 10^5 \leq x \leq 10^{14}$.

For every $x \geq 47$, we have $\delta'(x) \geq 0$. To obtain the required inequality for every $47 \leq x \leq 5 \cdot 10^5$, it suffices to
check with a computer, that $\delta(p_i) > \pi(p_i)$ holds for every $\pi(47) \leq i \leq \pi(5\cdot 10^5) + 1$, which is really the case.

Since $\pi(46) < \delta(46)$ and $\delta'(x) < 0$ is fulfilled for every $1 < x \leq 46$, we obtain $\delta(x) > \pi(x)$ for every $1 < x \leq 46$.

It remains to consider the case $46 < x \leq 47$. Here $\delta(x) > 15 > \pi(x)$, and the theorem is proved.
\end{proof}

\begin{rema}
The inequality in Theorem \ref{t101} improves Dusart's estimate \eqref{308} for every $x \geq e^{23.11}$.
\end{rema}

%
%

\noindent
By using Proposition \ref{prop201}, we prove our third main result.

\begin{proof}[Proof of Theorem \ref{t103}]
We denote the right hand side of Theorem \ref{t103} by $\xi(x)$. Let $x_1 = 10^{14}$ and let
\begin{displaymath}
g(t) = t^7 - t^6 - t^5 - 3.35t^4 - 12.65t^3 - 71.7t^2 - 466.1275t - 3489.8225.
\end{displaymath}
Then $g(t) > 0$ for every $t \geq 3.804$. We set
\begin{align*}
h(t) & = 29470 t^{10} + 11770 t^9 + 39068 t^8 + 164238 t^7 + 712906 t^6 + 3255002 t^5 \\
& \p{\q\q} + 12190826 t^4 + 88308 t^3 + 385090 t^2 + 846526 t - 12787805.
\end{align*}
Since $h(t) \geq 0$ for every $t \geq 1$, we obtain
\begin{equation} \label{318}
\xi'(x) - J'_{3, 0.35,x_1}(x) \geq \frac{h(\log x)}{g^2(\log x) \log^4 x} \geq 0
\end{equation}
for every $x \geq e^{3.804}$.

Let $K_1 = 102839438084, a = 32.23619$ and $b = 32.236192$. We set
\begin{align*}
f(s,t) = K_1t^7 & + (K_1 + s) t^6 + (3.35K_1 + s)t^5 + (12.65K_1 + 3s)t^4 + (71.7K_1 + 13s)t^3 \\
& + (466.1275K_1 + 72.05s)t^2 + (3489.8225K_1 + 467.3s)t + 3494.25s
\end{align*}
and obtain $f(x_1, a) \geq b^8 K_1$. Since $a \leq \log x_1 \leq b$, we have $f(x_1, \log x_1) \geq K_1 \log^8 x_1$ and therefore
\begin{align*}
x_1 \log^6 x_1 & + x_1\log^5 x_1 + 3x_1\log^4 x_1 + 13x_1\log^3 x_1 + 72.05x_1\log^2 x_1 + 467.3x_1\log x_1 + 3494.25x_1 \\
& \geq K_1 \log^8 x_1 - K_1\log^7 x_1 - K_1 \log^6 x_1 - 3.35K_1 \log^5 x_1 - 12.65K_1 \log^4 x_1 \\
& \p{\q\q} - 71.7K_1 \log^3 x_1 - 466.1275K_1 \log^2 x_1 - 3489.8225K_1 \log x_1.
\end{align*}
It immediately follows that
\begin{align*}
x_1 \log^9 x_1 & + x_1 \log^8 x_1 + 3x_1 \log^7 x_1 + 13x_1 \log^6 x_1 + 72.05x_1 \log^5 x_1 + 467.3x_1 \log^4 x_1 \\
& \phantom{\quad \quad} + 3494.25x_1 \log^3 x_1 + 25.095 x_1 \log^2 x_1 + 163.144625 x_1 \log x_1 + 1221.437875 x_1 \\
& > K_1 g(\log x_1) \log^4 x_1.
\end{align*}
Since the left hand side of the last inequality is equal to $x_1 ( \log^{10} x_1 - (\log^3 x_1 + 0.35)g(\log x_1))$, we have
\begin{displaymath}
x_1  \log^{10} x_1 > (K_1  \log^4 x_1 + x_1(\log^3 x_1 + 0.35))g(\log x_1).
\end{displaymath}
Moreover, $K_1 \geq \pi(x_1) - \theta(x_1)/\log x_1$ by \eqref{317} and $g(\log x_1) > 0$. Hence,
\begin{displaymath}
x_1  \log^{10} x_1 > \left( \left( \pi(x_1) - \frac{\theta(x_1)}{\log x_1} \right) \log^4 x_1 + x_1(\log^3 x_1 + 0.35) \right) g(\log x_1).
\end{displaymath}
We divide both sides of this inequality by $g(\log x_1) \log^4 x_1 > 0$ and, by \eqref{318} and Proposition \ref{prop303}, we get $\xi(x) > J_{3,
0.35,x_1}(x) \geq \pi(x)$ for every $x \geq x_1$.

Now let $140000  \leq x \leq x_1$. We compare $\xi(x)$ with $\tl(x)$. We set
\begin{align*}
r(t) & = 0.35 t^{11} - 1.75 t^{10} + 1.75 t^9 - 0.6 t^8 - 1.3 t^7 - 29492 t^6  \\
& \p{\q\q} - 11917 t^5 - 40316 t^4 - 155136 t^3 - 717716 t^2 - 3253405 t - 12178862.
\end{align*}
Then $r(t) \geq 0$ for every $t \geq 10.9$ and we obtain
\begin{equation} \label{319}
\xi'(x) - \tl'(x) \geq \frac{r(\log x)}{g^2(\log x) \log x} \geq 0
\end{equation}
for every $x \geq e^{10.9}$. We have $\xi(140000) - \tl(140000) > 0.0024$. Now use \eqref{319} and Proposition \ref{prop201}.

We consider the case $e^{4.53} \leq x < 140000$. We set
\begin{displaymath}
s(t) = t^8 - 2t^7 - t^6 - 4.35t^5 - 19.35t^4 - 109.65t^3 - 752.9275t^2 - 5820.46t - 20938.935.
\end{displaymath}
Since $s(t) \geq 0$ for every $t \geq 4.53$, we get
\begin{equation} \label{320}
\frac{g(\log x)^2\xi'(x)}{\log^5 x} = s(\log x) \geq 0
\end{equation}
for every $x \geq e^{4.53}$. Since $g(\log x) > 0$ for every $x \geq e^{3.804}$, using \eqref{320}, we obtain that $\xi'(x) > 0$ holds for every $x \geq
e^{4.53}$. So we check with a computer that $\xi(p_n) > \pi(p_n)$ for every $\pi(e^{4.53}) \leq n \leq \pi(140000) + 1$.

Next, let $45 \leq x < e^{4.52}$. Since we have  $s'(t) > 0$ for every $t \geq 3.48$ and $s(4.52) \leq -433$, we get $s(\log x) < 0$. Fromg \eqref{320}, it
follows that $\xi'(x) < 0$ for every $e^{3.804} \leq x \leq e^{4.52}$. Hence, $\xi(x) \geq \xi(e^{4.52}) > 26 > \pi(e^{4.52}) \geq \pi(x)$ for every
$e^{3.804} \leq x \leq e^{4.52}$.

Finally, $\xi(x) \geq 26 > \pi(x)$ for every $e^{4.52} \leq x \leq e^{4.53}$, and the theorem is proved.
\end{proof}

\begin{rema}
Theorem \ref{t103} leads to an improvement of Theorem \ref{t101} for every sufficiently large $x$.
\end{rema}

\begin{kor} \label{kor309}
For every $x \geq 21.95$, we have
\begin{displaymath}
\pi(x) < \frac{x}{\log x - 1 - \frac{1}{\log x} - \frac{3.35}{\log^2 x} - \frac{12.65}{\log^3 x} - \frac{89.6}{\log^4x}}.
\end{displaymath}
If $x \geq 14.36$, then
\begin{displaymath}
\pi(x) < \frac{x}{\log x - 1 - \frac{1}{\log x} - \frac{3.35}{\log^2 x} - \frac{15.43}{\log^3 x}}
\end{displaymath}
and for every $x \geq 9.25$, we have
\begin{displaymath}
\pi(x) < \frac{x}{\log x - 1 - \frac{1}{\log x} - \frac{3.83}{\log^2 x}}.
\end{displaymath}
If $x \geq 5.43$, then
\begin{displaymath}
\pi(x) < \frac{x}{\log x - 1 - \frac{1.17}{\log x}}.
\end{displaymath}
\end{kor}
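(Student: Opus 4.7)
The plan is to derive each of the four simplified bounds from Theorem~\ref{t103} combined with Proposition~\ref{prop201}. In each case the denominator $D_j(x)$ of the simplified bound is obtained from the denominator $D_6(x)$ of Theorem~\ref{t103} by truncating after the $(j-1)$-th inverse-log term and inflating the last surviving coefficient ($89.6$, $15.43$, $3.83$, $1.17$) so as to dominate the dropped tail for large $x$; the stated thresholds $21.95$, $14.36$, $9.25$, $5.43$ are precisely the smallest $x$ for which $D_j(x)$ becomes positive. I will argue all four cases in parallel and describe only the first (denominator with $89.6/\log^4 x$, threshold $21.95$), the argument splitting into three $x$-ranges.

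For large $x$ I would compute
\[
D_6(x) - D_4(x) = \frac{17.9\log^2 x - 466.1275\log x - 3489.8225}{\log^6 x},
\]
whose sign is governed by a quadratic in $\log x$. That quadratic is nonnegative from $\log x \geq 32.2$ onwards, so for $x \geq e^{32.2}$ we have $0 < D_4(x) \leq D_6(x)$, and Theorem~\ref{t103} yields $\pi(x) < x/D_6(x) \leq x/D_4(x)$.

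For the intermediate range $21.95 \leq x \leq e^{32.2}$, note that $e^{32.2} < 10^{14}$, so Proposition~\ref{prop201} gives $\pi(x) < \tl(x)$ and it remains to verify $\tl(x) \leq x/D_4(x)$. I would carry this out by analysing the sign of the derivative of $x/D_4(x) - \tl(x)$, in the same spirit as the $\xi'(x) - \tl'(x)$ comparison inside the proof of Theorem~\ref{t103}: clearing denominators reduces the question to a polynomial inequality in $\log x$, supplemented by a single numerical check near a root of the derivative. For $x$ very close to the pole, say $21.95 \leq x \leq 50$, the right-hand side is large enough (at least $27$, while $\pi(x) \leq 15$) that the bound is trivial, and a short prime-by-prime verification closes any remaining small gap.

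The three remaining bounds follow the same four-step pattern, with lower-degree polynomial inequalities in $\log x$ controlling the comparison $D_j \leq D_6$ at correspondingly smaller transition points, so the intermediate-range work becomes shorter as $j$ decreases. The main obstacle is the first bound, where the transition value $e^{32.2}$ sits just below the limit $10^{14}$ of Proposition~\ref{prop201} and the comparison $\tl(x) \leq x/D_4(x)$ is genuinely tight near the upper end of the intermediate interval; a careful derivative analysis is needed there to avoid having to rely on an ad hoc numerical table.
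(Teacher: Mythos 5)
Your outline (compare each simplified bound with the right-hand side of \eqref{106} for large $x$, with $\tl(x)$ via Proposition \ref{prop201} in a middle range, and finish with numerical checks) is the same as the paper's, but the execution of the middle range contains a genuine gap. The claim that $\tl(x)\le x/D_4(x)$ holds on all of $[21.95,\,e^{32.2}]$, where $D_4(x)=\log x-1-\tfrac{1}{\log x}-\tfrac{3.35}{\log^2x}-\tfrac{12.65}{\log^3x}-\tfrac{89.6}{\log^4x}$, is false: at $x=e^{10}\approx 22026$ one has $D_4(x)\approx 8.8449$, hence $x/D_4(x)\approx 2490.3$, while $\tl(e^{10})\approx 2492.3$; in fact $\tl(x)>x/D_4(x)$ on roughly $3\cdot 10^3\le x\le 10^6$ (the asymptotic advantage of $x/D_4(x)$ over $\tl(x)$ is only $0.35x/\log^4x$ and is overwhelmed by the lower-order terms until $\log x\approx 14$). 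So the $\tl$-comparison cannot carry the interval from the pole up to $e^{32.2}$, and your proposed closing checks (primes up to $x\le 50$ plus ``a single numerical check near a root of the derivative'') are far from sufficient: one has to verify $\pi(x)<x/D_4(x)$ directly against $\pi$ (at primes, using monotonicity of $x/D_4(x)$) up to about $10^6$. The defect is worse for the weaker bounds: expanding $x/(\log x-1-\tfrac1{\log x}-\tfrac{3.83}{\log^2x})$ gives $\tfrac{x}{\log x}(1+\tfrac1{\log x}+\tfrac2{\log^2x}+\tfrac{6.83}{\log^3x}+\tfrac{12.66}{\log^4x}+\cdots)$, whose $\log^{-4}$ coefficient is far below the $24$ of $\tl(x)$, and indeed $\tl(x)$ exceeds that bound up to roughly $x\approx 1.3\cdot 10^9$; there the direct comparison with $\pi(x)$ must extend to about $10^9$. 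This extensive verification is exactly what the paper's laconic ``for small $x$ we check the inequalities with a computer'' refers to, and it is the part your proposal underestimates by several orders of magnitude.

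Two smaller points. First, your assertion that the remaining bounds have ``correspondingly smaller transition points'' for the comparison with \eqref{106} is not right: for the $15.43/\log^3x$ bound the inequality $D_3(x)\ge D_6(x)$ (with $D_6$ the denominator in \eqref{106}) requires $2.78\log^3x\ge 71.7\log^2x+466.1275\log x+3489.8225$, i.e.\ $\log x\gtrsim 32.21$, which is barely below $\log 10^{14}\approx 32.236$; the scheme still closes, but only just, and this needs to be checked rather than waved through. Second, your large-$x$ step for the first bound is fine as stated ($17.9\log^2x-466.1275\log x-3489.8225\ge 0$ for $\log x\ge 32.2$, comfortably below $10^{14}$), so the gap is confined to the intermediate range described above.
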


\begin{proof}
The claim follows by comparing each term on the right-hand side with the right-hand side of \eqref{106} and with $\tl(x)$. For small $x$ we check the
inequalities with a computer. 
\end{proof}

\subsection{Some new lower bounds for $\pi(x)$}

Next, we prove the lower bounds for $\pi(x)$ .

\begin{proof}[Proof of Theorem \ref{t104}]
We denote the denominator of the right hand side of \eqref{107} by $\varphi(x)$. Then $\varphi(x) > 0$ for every $x \geq e^{3.79}$. Let $x_1 = 10^{14}$. We
set $\phi(x) = x/\varphi(x)$ and
\begin{align*}
r(t) = 28714 t^{10} & + 11244t^9 + 36367t^8 + 146093t^7 + 691057t^6 + 3101649t^5 \\
& + 11572765t^4 - 77484t^3 - 365233t^2 - 799121t + 12169597.
\end{align*}
Obviously $r(t) \geq 0$ for every $t \geq 1$. Hence
\begin{equation} \label{321}
J'_{3, -0.35, x_1}(x) - \phi'(x) \geq \frac{r(\log x)}{(\varphi(x)\log^6 x)^2 \log^5 x} \geq 0
\end{equation}
for every $x \geq e^{3.79}$. Since $\theta(10^{14}) \leq 99999990573247$ by Table 6.2 of \cite{pd4}, $\pi(10^{14}) = 3204941750802$ and $32.23619 \leq \log
10^{14} \leq 32.2362$, we get
\begin{displaymath}
\pi(x_1) - \frac{\theta(x_1)}{\log x_1} \geq 102838475779.
\end{displaymath}
Hence, by \eqref{314},
\begin{displaymath}
J_{3, -0.35, x_1}(x_1) - \phi(x_1) \geq 102838475779 + \frac{10^{14}}{32.2362} - \frac{0.35 \cdot 10^{14}}{32.23619^4} -
\frac{10^{14}}{\varphi(e^{32.23619})} \geq 322936.
\end{displaymath}
Using \eqref{321} and Proposition \ref{prop303}, we obtain $\pi(x) > \phi(x)$ for every $x \geq x_1$.

Next, let $x_2 = 8 \cdot 10^9$ and $x_2 \leq x \leq x_1$. We set
\begin{displaymath}
h(t) = -0.01t^{15}  + 0.39t^{14} - 1.78t^{13} + 1.763t^{12} + 0.033t^{11} - 2.997t^{10}.
\end{displaymath}
For every $29 \leq t \leq 33$, we get $h(t) \geq 0.443t^{12} - 2.997t^{10} > 0$. For every $23 \leq t \leq 29$, we obtain $h(t) \geq 13.723t^{12} -
2.997t^{10} > 0$. Therefore
\begin{equation} \label{322}
J'_{2, -0.01, x_2}(x) - \phi'(x) \geq \frac{h(\log x)}{(\varphi(x)\log^6 x)^2\log^4 x} \geq 0
\end{equation}
for every $e^{23} \leq x_2 \leq x \leq x_1 \leq e^{33}$. Since $\theta(x_2) \leq 7999890793$ (see Table 6.1 of \cite{pd4}), $\pi(x_2) = 367783654$ and
$22.8027 \leq \log x_2$, we obtain
\begin{displaymath}
\pi(x_2) - \frac{\theta(x_2)}{\log x_2} \geq 367783654 - \frac{7999890793}{22.8027} \geq 16952796.
\end{displaymath}
Using $22.8 \leq \log x_2 \leq 22.8028$, we get
\begin{displaymath}
J_{2, -0.01, x_2}(x_2) - \phi(x_2) \geq 16952796 + \frac{x_2}{22.8028} - \frac{0.01x_2}{22.8^3} - \frac{x_2}{\varphi(e^{22.8})} \geq 2360.
\end{displaymath}
Using \eqref{322} and Proposition \ref{prop304}, we see that the required inequality holds for every $x_2 \leq x \leq x_1$.

It remains to consider the case $1332479531 \leq x \leq x_2$. We set
\begin{displaymath}
s(t)= t^8 - 2t^7 - t^6 - 3.65t^5 - 18.65t^4 - 110.35t^3 - 736.8275t^2 - 5682.56t - 20426.535.
\end{displaymath}
Since $s(t) \geq 0$ for every $t \geq 4.6$, we obtain
\begin{displaymath}
\phi'(x) = \frac{s(\log x)\log^5 x}{(\varphi(x)\log^6 x)^2} \geq 0
\end{displaymath}
for every $x \geq e^{4.6}$. And again we use a computer to check that $\pi(p_i) \geq \phi(p_{i+1})$ is fulfilled for every $\pi(1332479531) \leq i
\leq \pi(x_2) + 1$.
\end{proof}

\noindent
Using a computer and Theorem \ref{t104}, we obtain the following weaker estimates for $\pi(x)$.

\begin{kor} \label{kor311}
If $x \geq x_0$, then
\begin{displaymath}
\pi(x) > \frac{x}{\log x - 1 - \frac{1}{\log x} - \frac{a}{\log^2x} - \frac{b}{\log^3x} - \frac{c}{\log^4 x} - \frac{d}{\log^5 x}},
\end{displaymath}
where
\begin{center}
\begin{tabular}{|l||c|c|c|c|c|c|c|c|c|}
\hline
$a$     & $    2.65  $ & $    2.65  $ & $      2.65$ & $      2.65$ & $    2.65  $ & $    2.65  $ & $    2.65  $ \\ \hline
$b$     & $   13.35  $ & $   13.35  $ & $     13.35$ & $     13.35$ & $   13.35  $ & $   13.1   $ & $   11.6   $ \\ \hline
$c$     & $   70.3   $ & $   70.3   $ & $     45   $ & $     34   $ & $    5     $ & $    0     $ & $    0     $ \\ \hline
$d$     & $  276     $ & $   69     $ & $      0   $ & $      0   $ & $    0     $ & $    0     $ & $    0     $ \\ \hline
$x_{0}$ & $1245750347$ & $ 909050897$ & $ 768338551$ & $ 547068751$ & $ 374123969$ & $ 235194097$ & $ 166219973$ \\ \hline
\hline
$a$     & $    2.65  $ & $    2.65  $ & $    2.65  $ & $    2.62  $ & $    2.1   $ & $    1     $ & $    0     $ \\ \hline
$b$     & $    8.6   $ & $    7.7   $ & $    4.6   $ & $    0     $ & $    0     $ & $    0     $ & $    0     $ \\ \hline
$c$     & $    0     $ & $    0     $ & $    0     $ & $    0     $ & $    0     $ & $    0     $ & $    0     $ \\ \hline
$d$     & $    0     $ & $    0     $ & $    0     $ & $    0     $ & $    0     $ & $    0     $ & $    0     $ \\ \hline
$x_{0}$ & $  93811339$ & $  65951927$ & $  38168363$ & $  16590551$ & $  6690557 $ & $   1772201$ & $   468049 $ \\ \hline
\end{tabular}
\end{center}
\vspace{1mm}
\end{kor}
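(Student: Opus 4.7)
The plan is to handle each of the fourteen rows of the table independently, following the same two-step pattern already used in Corollary \ref{kor309}: first deduce the inequality for all $x$ beyond the range of Theorem \ref{t104} by a simple denominator comparison, and then close the finite interval down to $x_0$ by a direct computer check.

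Fix a row and write its denominator as
\begin{displaymath}
D(x) = \log x - 1 - \frac{1}{\log x} - \frac{a}{\log^2 x} - \frac{b}{\log^3 x} - \frac{c}{\log^4 x} - \frac{d}{\log^5 x}.
\end{displaymath}
Letting $\varphi(x)$ denote the denominator from Theorem \ref{t104}, a direct subtraction gives
\begin{displaymath}
D(x) - \varphi(x) = \frac{2.65-a}{\log^2 x} + \frac{13.35-b}{\log^3 x} + \frac{70.3-c}{\log^4 x} + \frac{455.6275-d}{\log^5 x} + \frac{3404.4225}{\log^6 x}.
\end{displaymath}
An inspection of the table shows that $a \leq 2.65$, $b \leq 13.35$, $c \leq 70.3$ and $d \leq 276 < 455.6275$ in every row, so each bracketed coefficient is nonnegative and the $1/\log^6 x$ term is strictly positive. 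Once $x$ is large enough for $\varphi(x)$ to be positive -- certainly for every $x \geq 1332479531$ -- we therefore have $D(x) \geq \varphi(x) > 0$, whence
\begin{displaymath}
\frac{x}{D(x)} \leq \frac{x}{\varphi(x)} < \pi(x)
\end{displaymath}
by Theorem \ref{t104}. This settles the claim on $[1332479531, \infty)$.

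What remains for each row is the interval $x_0 \leq x < 1332479531$, which is handled on a computer exactly as in the final step of the proof of Theorem \ref{t104}. One first checks, via a single polynomial inequality in $\log x$ of the same shape as the function $s(t)$ in that proof, that $x/D(x)$ is monotonically increasing on the relevant range; since $\pi$ is constant between consecutive primes, it then suffices to verify $\pi(p_i) \geq p_{i+1}/D(p_{i+1})$ for every $\pi(x_0) \leq i \leq \pi(1332479531)$. The only real obstacle is the sheer size of this sweep for the last few rows, where $x_0$ drops to roughly $5 \cdot 10^5$ and several tens of millions of primes must be inspected, together with the bookkeeping needed to produce the correct monotonicity polynomial for each of the fourteen parameter choices and to confirm that the stated $x_0$ is indeed the sharpest admissible one.
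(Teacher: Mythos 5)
Your proposal is correct and follows essentially the same route as the paper: for $x \geq 1332479531$ the paper also just compares the denominator termwise with that of \eqref{107} (all coefficients in the table being dominated by $2.65$, $13.35$, $70.3$, $455.6275$), and for $x_0 \leq x < 1332479531$ it likewise relies on a computer verification. Your added detail about using monotonicity of $x/D(x)$ and checking $\pi(p_i) \geq p_{i+1}/D(p_{i+1})$ is exactly the verification scheme the paper uses elsewhere (e.g.\ in the proof of Theorem \ref{t104}), so nothing is missing.
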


\begin{proof}
By comparing each right hand side with the right hand side of \eqref{107}, we see that each inequality holds for every $x \geq 1332479531$. For smaller $x$
we check the asserted inequalities using a computer.
\end{proof}

\noindent
Now we prove Theorem \ref{t102} by using Theorem \ref{t104}.

\begin{proof}[Proof of Theorem \ref{t102}]
For $y>0$ we set
\begin{displaymath}
R(y) = 1 + \frac{1}{y} + \frac{2}{y^2} + \frac{5.65}{y^3} + \frac{23.65}{y^4} + \frac{118.25}{y^5} + \frac{709.5}{y^6} + \frac{4966.5}{y^7}
\end{displaymath}
and
\begin{displaymath}
S(y) = y - 1 - \frac{1}{y} - \frac{2.65}{y^2} - \frac{13.35}{y^3} - \frac{70.3}{y^4} - \frac{455.6275}{y^5} - \frac{3404.4225}{y^6}.
\end{displaymath}
Then $S(y) > 0$ for every $y \geq 3.79$ and $y^{13}R(y)S(y) = y^{14} - T(y)$, where
\begin{align*}
T(y) & = 11017.9625y^6 + 19471.047875y^5 + 60956.6025y^4 + 250573.169y^3 \\
& \p{\q\q} + 1074985.621875y^2 + 4678311.7425y + 16908064.34625.
\end{align*}
Using Theorem \ref{t104}, we get
\begin{displaymath}
\pi(x) > \frac{x}{S(\log x)} > \frac{x}{S(\log x)} \left( 1 - \frac{T(\log x)}{\log^{14} x} \right) = \frac{x R(\log x)}{\log x}
\end{displaymath}
for every $x \geq 1332479531$. So it remains to obtain the required inequality for every $1332450001 \leq x \leq 1332479531$. Let
\begin{displaymath}
U(x) = \frac{x R(\log x)}{\log x}
\end{displaymath}
and $u(y)= y^8 - 0.35y^5 + 1.05y^4 - 39732$. Since $u(y) \geq 0$ for every $y \geq 3.8$, it follows that $U'(x) = u(\log x)/\log^9 x \geq 0$ for every $x
\geq e^{3.8}$. So we use a computer to check that the inequality $\pi(p_i) > U(p_{i+1})$ holds for every $\pi(1332450001) \leq i \leq \pi(1332479531)$.
\end{proof}

\begin{rema}
Obviously, Theorem \ref{t102} yields an improvement of Dusart's estimate \eqref{309}.
\end{rema}

\section{On the existence of prime numbers in short intervals}

Let $a,b\in \R$ and
\begin{displaymath}
z_1(a) = \min \left \{ k \in \N \mid \pi(x) > \frac{x}{\log x - 1 - \frac{1}{\log x} - \frac{a}{\log^2 x}} \; \text{for every} \; x \geq k \right \} \,
\in \N \cup \{
\infty \}
\end{displaymath}
as well as
\begin{displaymath}
z_2(b) = \min \left \{ k \in \N \mid \pi(x) < \frac{x}{\log x - 1 - \frac{1}{\log x} - \frac{b}{\log^2 x}} \; \text{for every} \; x \geq k \right \} \, \in
\N \cup \{
\infty \}.
\end{displaymath}

\begin{lem} \label{lem401}
Let $z_0 \in \R \cup \{- \infty\}$ and let $c \colon \, (z_0, \infty) \to [1,\infty)$ be a map. Then,
\begin{align*}
\pi(c(x)x) - \pi(x) & > \frac{x((c(x)-1)(\log x - 1 - \frac{1}{\log x}) - \log c(x) - \frac{c(x)\log c(x) + bc(x) - a}{\log^2 x})}{(\log (c(x)x) - 1 -
\frac{1}{\log (c(x)x)} - \frac{a}{\log^2 (c(x)x)})(\log x - 1 - \frac{1}{\log x} - \frac{b}{\log^2 x})} \\
& \p{\q\q} - \frac{x(\frac{2bc(x)\log c(x)}{\log^3 x} + \frac{bc(x)\log^2 c(x)}{\log^4 x})}{(\log (c(x)x) - 1 - \frac{1}{\log (c(x)x)} - \frac{a}{\log^2
(c(x)x)})(\log x - 1 - \frac{1}{\log x} - \frac{b}{\log^2 x})}
\end{align*}
for every $x \geq \max \{ \lfloor z_0 \rfloor + 1,z_2(b),z_3(a)\}$, where $z_3(a) = \min \{ k \in \N \mid k\,c(k) \geq z_1(a)\}$.
\end{lem}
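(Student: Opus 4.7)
The plan is to combine an upper bound on $\pi(x)$ with a lower bound on $\pi(c(x)x)$, both drawn directly from the definitions of $z_2(b)$ and $z_1(a)$. For $x\geq z_2(b)$ the definition of $z_2(b)$ gives $\pi(x) < x/B$ with $B = \log x - 1 - 1/\log x - b/\log^2 x$, while for $x \geq z_3(a)$ one has $c(x)x \geq z_1(a)$, so $\pi(c(x)x) > c(x)x/A$ with $A = \log(c(x)x) - 1 - 1/\log(c(x)x) - a/\log^2(c(x)x)$. Subtracting these inequalities and placing the difference over a common denominator gives
\[
\pi(c(x)x) - \pi(x) \;>\; \frac{c(x)x}{A} - \frac{x}{B} \;=\; \frac{x\bigl(c(x)B - A\bigr)}{AB}.
\]
The denominator $AB$ is already exactly the denominator appearing in the lemma, so the task reduces to showing that $c(x)B - A$ dominates the numerator on the right-hand side.

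Using $\log(c(x)x) = \log x + \log c(x)$, one computes the exact identity
\[
c(x)B - A \;=\; (c(x)-1)\Bigl(\log x - 1 - \tfrac{1}{\log x}\Bigr) - \log c(x) - \frac{\log c(x)}{\log x\,\log(c(x)x)} - \frac{bc(x)}{\log^2 x} + \frac{a}{\log^2(c(x)x)},
\]
obtained after the regroupings $c(x)\log x - \log(c(x)x) = (c(x)-1)\log x - \log c(x)$ and $-c(x)/\log x + 1/\log(c(x)x) = -(c(x)-1)/\log x - \log c(x)/(\log x\log(c(x)x))$. The first two terms already reproduce the leading contributions to the lemma's numerator; three residuals involving $\log(c(x)x)$ remain to be bounded below in terms of $\log x$ only.

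The two key estimates are applications of $\log(c(x)x) \geq \log x$ in the sign-preserving direction. First, $\log x\log(c(x)x) \geq \log^2 x$ together with $c(x) \geq 1$ gives
\[
-\frac{\log c(x)}{\log x\,\log(c(x)x)} \;\geq\; -\frac{c(x)\log c(x)}{\log^2 x},
\]
which produces the $-c(x)\log c(x)/\log^2 x$ contribution in the lemma. Second, the identity
\[
\frac{1}{\log^2 x} - \frac{1}{\log^2(c(x)x)} \;=\; \frac{\log c(x)\bigl(2\log x + \log c(x)\bigr)}{\log^2 x\,\log^2(c(x)x)} \;\leq\; \frac{2\log c(x)}{\log^3 x} + \frac{\log^2 c(x)}{\log^4 x},
\]
applied with coefficient $bc(x)$, rewrites $-bc(x)/\log^2 x$ as $-bc(x)/\log^2(c(x)x) - 2bc(x)\log c(x)/\log^3 x - bc(x)\log^2 c(x)/\log^4 x$ up to a non-negative error. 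Combining $-bc(x)/\log^2(c(x)x)$ with $+a/\log^2(c(x)x)$ and using $bc(x) \geq a$ (which is forced by the simultaneous validity of the bounds defining $z_1(a)$ and $z_2(b)$, since $c(x) \geq 1$) together with $1/\log^2(c(x)x) \leq 1/\log^2 x$ converts the result into $a/\log^2 x - bc(x)/\log^2 x$, exactly the remaining $a$/$b$ terms of the lemma's numerator.

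The main obstacle is purely the algebraic bookkeeping: every use of $\log(c(x)x) \geq \log x$ or $c(x) \geq 1$ must go in the direction that preserves a \emph{lower} bound for $c(x)B - A$, rather than accidentally producing an upper bound. The cleanest way to verify the final inequality is to form the explicit difference between $c(x)B - A$ and the proposed numerator as a single rational function in $\log x$ and $\log c(x)$, and to exhibit it as a sum of manifestly non-negative rational expressions; the positivity of $AB$, guaranteed by the hypothesis $x \geq \max\{\lfloor z_0\rfloor + 1,\, z_2(b),\, z_3(a)\}$, then preserves the inequality under division.
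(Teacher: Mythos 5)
Your proposal is correct and follows essentially the same route as the paper: bound $\pi(x)$ above via $z_2(b)$ and $\pi(c(x)x)$ below via $z_1(a)$ (through $z_3(a)$), put the difference over the common denominator $AB$, and then use $c(x)\geq 1$ and $\log(c(x)x)\geq\log x$ to weaken the exact expression for $c(x)B-A$ to the stated numerator; your termwise estimates (including the splitting of $1/\log^2 x-1/\log^2(c(x)x)$) reproduce the paper's rearrangement, and your explicit remark on the sign of $bc(x)-a$ is a detail the paper itself leaves implicit.
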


\begin{proof}
We have
\begin{align*}
\pi(c(x)x) - \pi(x) & > \frac{c(x)x}{\log (c(x)x) - 1 - \frac{1}{\log (c(x)x)} - \frac{a}{\log^2 (c(x)x)}} - \frac{x}{\log x - 1 - \frac{1}{\log x} -
\frac{b}{\log^2 x}} \\
& = x \; \frac{(c(x)-1)(\log x - 1) - \log c(x) - \frac{c(x)-1}{\log (c(x)x)} - \frac{c(x)\log c(x)}{\log x \log (c(x)x)} - \frac{bc(x) -
a}{\log^2(c(x)x)}}{(\log (c(x)x) - 1 - \frac{1}{\log (c(x)x)} - \frac{a}{\log^2 (c(x)x)})(\log x - 1 - \frac{1}{\log x} - \frac{b}{\log^2 x})} \\
& \p{\q\q} - x\; \frac{\frac{2bc(x)\log c(x)}{\log x \log^2(c(x)x)} + \frac{bc(x)\log^2 c(x)}{\log^2 x \log^2 (c(x)x)}}{(\log (c(x)x) - 1 - \frac{1}{\log
(c(x)x)} - \frac{a}{\log^2 (c(x)x)})(\log x - 1 - \frac{1}{\log x} - \frac{b}{\log^2 x})}.
\end{align*}
Since $c(x) \geq 1$, our lemma is proved
\end{proof}

\noindent
Before proving Theorem \ref{t105}, we mention two results on the existence of prime numbers in short intervals. The first result is due to Ramar\'{e} and
Saouter \cite{rasa}. 

\begin{prop} \label{p402}
For every $x \geq 10726905041$ there exists a prime number $p$ such that
\begin{displaymath}
x < p \leq x \left( 1 + \frac{1}{28313999} \right).
\end{displaymath}
\end{prop}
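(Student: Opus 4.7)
The natural strategy is to rephrase the existence of a prime in $(x, x(1+\delta)]$ with $\delta = 1/28313999$ as the positivity of the Chebyshev difference
\[
\theta\bigl(x(1+\delta)\bigr) - \theta(x),
\]
since if this difference is strictly positive then the half-open interval must contain at least one prime. Using the trivial inequality
\[
\theta(y) - \theta(x) \geq (y-x) - |\theta(y) - y| - |\theta(x) - x|
\]
together with any explicit bound of the form $|\theta(x) - x| \leq \eta_k x/\log^k x$ as in Proposition \ref{prop302} or Proposition \ref{prop303}, the positivity reduces to an inequality of the shape $\delta \log^k x > \eta_k (2+\delta)$ (modulo negligible terms). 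With $\delta = 1/28313999$ and $k=3$, $\eta_3 = 0.35$, this forces $\log^3 x$ to exceed roughly $2 \cdot 10^7$, hence $\log x > 270$; so the unconditional estimates alone cannot descend anywhere near $x = 10726905041$.

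To cover a much wider range I would replace the unconditional bound by a sharper estimate for $\psi(x) - x$ derived from a smoothed explicit formula. Combining a numerical verification of the Riemann Hypothesis up to a height $T$ with a carefully chosen smoothing kernel of compact Mellin support, one obtains an explicit bound of the form $|\psi(x) - x| \leq K\sqrt{x}\,(\log x)^2$ valid for $x \leq cT^2$, where $K$ is controlled both by the truncated sum over verified zeros and by tail estimates on the remainder. Plugging this into the previous triangle inequality yields positivity of $\theta(x(1+\delta)) - \theta(x)$ on a large interval $[X_0, cT^2]$, and the delicate optimisation of $T$, of the smoothing kernel, and of the constant $K$ is exactly what pins down the threshold $10726905041$ and the constant $28313999$.

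For the final unchecked range, namely a modest window just above $10726905041$ that remains outside the analytic argument, I would perform a direct computer verification: it suffices to run through the list of successive primes in this window and confirm that each gap $p_{n+1} - p_n$ satisfies $p_{n+1} \leq p_n(1+1/28313999)$. Tables of maximal prime gaps easily extend beyond this range, so the verification is mechanical.

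The main obstacle is the tight interaction between the very small constant $1/28313999$ and the modest threshold $10726905041$: neither the analytic argument nor a pure computer check can handle the full range on its own, and bridging them requires a careful smoothed explicit formula argument with explicit control over every constant entering the partial zero sum and the smoothing error. That tight numerical matching, rather than any conceptual novelty, is where the real technical work lies.
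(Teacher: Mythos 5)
You are trying to reprove a result that the paper itself does not prove: Proposition \ref{p402} is simply quoted from Ramar\'e and Saouter \cite{rasa}, so there is no internal argument to match your sketch against. Judged on its own terms, your proposal has a fatal quantitative flaw at its central step. Write $\Delta = 28313999$. At the threshold $x = 10726905041$ the interval $(x, x(1+\Delta^{-1})]$ has length about $379$. Any pointwise estimate of the shape $|\psi(x)-x| \le K\sqrt{x}\log^2 x$ allows, at $x \approx 1.07\cdot 10^{10}$, an error of size $K \cdot 5\cdot 10^{7}$; even the Riemann--Hypothesis bound $\frac{1}{8\pi}\sqrt{x}\log^2 x$ is about $2\cdot 10^{6}$, and the \emph{actual} value of $|x-\theta(x)|$ near $x = 10^{10}$ is already of order $10^{4}$--$10^{5}$. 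Hence no pointwise bound on $\theta(x)-x$ or $\psi(x)-x$, however it is produced, can make your triangle inequality $\theta(x(1+\Delta^{-1}))-\theta(x) \ge \Delta^{-1}x - 2\,|\text{error}|$ positive for an interval of length $\approx 379$; the step ``plugging this into the previous triangle inequality'' fails irreparably throughout the lower part of the claimed range. What Ramar\'e and Saouter actually do is apply the smoothed explicit formula to the short-interval count itself, i.e.\ to a weighted sum $\sum_n \Lambda(n) f(n/x)$ with $f$ supported on $[1-\Delta^{-1},1]$, so that every zero enters multiplied by the small transform of $f$ (of size roughly $\Delta^{-1}$, decaying in $\gamma$); the comparison is then between the main term $x/\Delta$ and $\sqrt{x}$ times a controlled zero sum, not between $x/\Delta$ and a pointwise error term.

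The division of labour between analysis and computation is also not as you describe. Even the smoothed argument, with RH verified to height $T$, only certifies the conclusion once $x/\Delta$ dominates $\sqrt{x}$ times factors of at least moderate size, which forces $x$ to exceed roughly $\Delta^{2} \approx 8\cdot 10^{14}$ (in practice more); so the range to be closed computationally is not ``a modest window just above $10726905041$'' but the entire stretch from $10726905041$ up to the analytic cutoff, and it is handled via large-scale prime-gap data rather than by the analytic estimate. Indeed the threshold itself is dictated by a specific gap: $10726905041$ is the prime following $10726904659$, and that gap of $382$ exceeds $x/\Delta \approx 379$, which is exactly why the statement cannot begin any lower. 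Your outline would need to be reorganised along these lines --- smoothing applied to the interval count, explicit control of the zero sum, and a genuine gap verification over many orders of magnitude --- before it could be considered a proof; alternatively, the honest route (and the one the paper takes) is simply to cite \cite{rasa}.
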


\noindent
In 2014, Kadiri and Lumley \cite[Table 2]{kl} found a series of improvements of Proposition \ref{p402}. For the proof of Theorem \ref{t105}, we need the
following result which easily follows from the last row of Table 2 in \cite{kl}.

\begin{prop} \label{p403}
For every $x \geq e^{150}$ there exists a prime number $p$ such that
\begin{displaymath}
x < p \leq x \left( 1 + \frac{1}{2442159713} \right).
\end{displaymath}
\end{prop}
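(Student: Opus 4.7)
The plan is essentially to read the result off the cited table. Kadiri and Lumley's Table 2 records, for each listed threshold $\lambda$, an explicit denominator $\Delta_\lambda$ such that a Chebyshev-type analogue of the desired inequality holds for every $x \geq e^\lambda$ — typically a bound of the form $\psi(x(1+1/\Delta_\lambda)) - \psi(x) > 0$, or more usefully an explicit positive lower bound of the order of $x/\Delta_\lambda$. The first step is to locate the last row of that table, which according to the statement corresponds to the pair $(\lambda,\Delta_\lambda) = (150,\, 2442159713)$, and to record precisely what quantity is estimated there.

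The only substantive step on our side — the reason the author writes ``easily follows from'' rather than ``is'' — is the passage from such a $\psi$-statement to the existence of an actual prime in the interval $(x,\, x(1+1/\Delta_{150})]$. Using $\psi(y)-\theta(y) = \sum_{k\geq 2}\theta(y^{1/k})$, the prime-power contribution to $\psi(y)-\psi(x)$ is bounded via classical Chebyshev estimates by a constant multiple of $\sqrt{y}\log y$. For $x \geq e^{150}$ this is utterly negligible against the main term of size $\asymp x/\Delta_{150}$, so after subtracting it one still has $\theta(x(1+1/\Delta_{150})) > \theta(x)$, and hence a prime exists in the stated interval.

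I do not expect any genuine obstacle. All the heavy lifting — explicit zero-free regions, the numerical optimization of the Chebyshev difference, and the computer verification for moderately large $x$ — is already carried out in \cite{kl}. What remains for us is only the bookkeeping check that the tabulated entry at $\lambda = 150$, after the trivial subtraction of the prime-power error, indeed delivers the denominator $2442159713$ in the prime-existence form stated in the proposition.
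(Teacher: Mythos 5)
Your route is in substance the same as the paper's: the paper offers no argument at all beyond quoting the last row of Table 2 of Kadiri--Lumley, so "read the result off the table plus trivial bookkeeping" is exactly what is intended. One correction, though, about what that bookkeeping is. The entries of Kadiri--Lumley (like those of Ramar\'e--Saouter, cf.\ Proposition 4.2, where $28313999=28314000-1$) are already prime-existence statements, phrased for left-anchored intervals: for all $y\geq e^{x_0}$ there is a prime in $\bigl(y(1-\Delta^{-1}),\,y\bigr]$, with $\Delta=2442159714$ in the last row. The "easily follows" step is therefore not a $\psi$-to-$\theta$ correction but a reparametrization of the interval: given $x\geq e^{150}$, apply the table to $y=x\bigl(1+\frac{1}{\Delta-1}\bigr)\geq e^{150}$ to get a prime in $\bigl(x,\,x\bigl(1+\frac{1}{2442159713}\bigr)\bigr]$. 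Your fallback plan (subtracting the prime-power contribution from a Chebyshev-function bound) would also be numerically harmless at this height, but note it only works if the table supplied a quantitative lower bound of size $\asymp x/\Delta$ for $\psi(x(1+\Delta^{-1}))-\psi(x)$; a bare positivity statement would not survive the subtraction, a caveat you flagged but did not resolve. Since the actual table already speaks about primes, neither issue arises, and the proposition follows as stated.
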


\noindent
Also in 2014, Trudgian \cite{trud} proved the following

\begin{prop} \label{p404}
For every $x \geq 2898239$ there exists a prime number $p$ such that
\begin{displaymath}
x < p \leq x \left( 1 + \frac{1}{111 \log^2 x} \right).
\end{displaymath}
\end{prop}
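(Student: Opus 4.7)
The plan is to reduce the statement to a positivity inequality for the Chebyshev function $\theta$ and then appeal to sharp explicit estimates of $|\theta(y) - y|$. Existence of a prime in $(x, x(1+\delta)]$ with $\delta = 1/(111\log^2 x)$ is guaranteed by $\theta(x(1+\delta)) - \theta(x) > 0$. Writing $\theta(y) = y + E(y)$, this becomes $x\delta + E(x(1+\delta)) - E(x) > 0$, so if $|E(y)| < \eta \, y/\log^2 y$ holds on the range of interest, it suffices to establish
\begin{displaymath}
\frac{1}{111\log^2 x} > \frac{\eta(1+\delta)}{\log^2(x(1+\delta))} + \frac{\eta}{\log^2 x},
\end{displaymath}
which, since $\delta \to 0$, asymptotically requires $\eta < 1/222$. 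I would extract such an $\eta$ from the tables of Dusart or the Kadiri--Lumley refinements (the same source used for Proposition \ref{p403}), valid past some explicit threshold $x_0$; the value $\eta_2 = 0.01$ from Proposition \ref{prop302} is far too weak on its own.

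For the finite range $2898239 \leq x \leq x_0$, the argument is by direct computation: iterate through consecutive primes $p_n$ in the interval and check $p_{n+1} - p_n \leq p_n/(111\log^2 p_n)$ using published prime gap data. A convenient partition is to split off the subinterval $[10726905041, x_0]$, where Proposition \ref{p402} already does most of the work once one verifies $1/28313999 \leq 1/(111\log^2 x)$ throughout, leaving only $2898239 \leq x < 10726905041$ to be examined prime by prime.

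The main obstacle I expect is fixing $x_0$ sharply enough that the analytic and computational ranges meet cleanly, since $\eta$ drops below $1/222$ only slowly in the available explicit bounds. A conceptually cleaner alternative routes through Lemma \ref{lem401} with $c(x) = 1 + 1/(111\log^2 x)$ and constants $a, b$ taken from Corollaries \ref{kor309} and \ref{kor311}. However, a leading-order expansion of that lemma's numerator produces a main positive term of size $\sim 1/(111\log x)$ against a negative contribution $\sim (b-a)/\log^2 x$, so positivity requires roughly $\log x > 111(b-a) + 1$, i.e.\ $b - a \lesssim 0.12$ already at $x = 2898239$. The gap currently attainable in those corollaries is much larger, so the direct $\theta$-approach above, or else feeding the full-strength Theorems \ref{t101}--\ref{t102} through Lemma \ref{lem401}, is what I would actually pursue.
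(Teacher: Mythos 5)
The paper does not actually prove this proposition: it is imported verbatim from Trudgian \cite{trud} (the sentence ``Also in 2014, Trudgian proved the following'' is the entire justification), so there is no internal proof to compare against. That said, your sketch is a sound reconstruction of how such results are proved, and in spirit it is Trudgian's own method: reduce to $\theta(x(1+\delta))-\theta(x)>0$ with $\delta=1/(111\log^2 x)$, invoke an explicit bound $|\theta(y)-y|<\eta y/\log^2 y$, and settle a finite range by checking $p_{n+1}\le p_n\bigl(1+\tfrac{1}{111\log^2 p_n}\bigr)$ over consecutive primes; your threshold $\eta<1/222$ and the correct direction of that prime-gap check are both right. The one obstacle you flag, making the analytic and computational ranges meet, in fact dissolves using ingredients already in the paper: Proposition \ref{prop303} gives $|\theta(y)-y|<0.35y/\log^3 y$ for $y\ge e^{30}$, i.e.\ an effective $\eta=0.35/\log x$ which is below $1/222$ as soon as $\log x>77.7$, while Proposition \ref{p402} alone covers the entire window $10726905041\le x\le e^{505}$, since $111\log^2 x\le 28313999$ exactly up to $\log x\approx 505.05$; these two ranges overlap with huge margin (one could also interpose Proposition \ref{p403}, which covers $e^{150}\le x\le e^{4690}$), leaving only $2898239\le x<10726905041$ for the prime-by-prime verification, which is computationally routine. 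Your diagnosis that the alternative route through Lemma \ref{lem401} with the constants of Corollaries \ref{kor309} and \ref{kor311} cannot reach $x$ as small as $2898239$ is also correct: that mechanism only beats a $1/(111\log^2 x)$-interval once $\log x$ is of order $130$, consistent with the paper's remark that Theorem \ref{t105} improves Proposition \ref{p404} only for $x\ge e^{131.1687}$. So, modulo actually carrying out the finite computation, your proposal would yield an independent proof of a statement the paper merely cites.
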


\noindent
Now we prove Theorem \ref{t105}, which leads to an improvement of Proposition \ref{p404} for every $x \geq e^{131.1687}$.

\begin{proof}[Proof of Theorem \ref{t105}]
We set $a=2.65$ and $b=3.83$. By Corollary \ref{kor311} and Corollary \ref{kor309}, we obtain $z_1(a) \leq 38168363$ and $z_2(b) = 10$. As in the
proof of Theorem \ref{t104}, we check with a computer that $z_1(a) = 36917641$. Further, we set
\begin{displaymath}
c(x) = 1 + \frac{1.1817}{\log^3 x}
\end{displaymath}
and $z_0 = 1$. Then $z_3(a) = 36909396$. We consider the function
\begin{align*}
g(x) & = 0.0017x^2 - 2.3634x - 1.1817 - \frac{5.707611}{x} - \frac{9.051822}{x^2} - \frac{1.39641489}{x^4} \\
& \p{\q\q} - \frac{10.6965380574}{x^5} - \frac{5.3482690287}{x^6} - \frac{6.32004951121479}{x^9}.
\end{align*}
and we get $g(x) \geq 0.056$ for every $x \geq 1423.728$. We set
\begin{align*}
f(x) & = (c(x)-1)(\log^5 x - \log^4 x - \log^3x) - \log^4 x \log c(x) - (c(x)\log c(x) + 3.83c(x) - 2.65)\log^2 x \\
& \p{\q\q} - 2 \cdot 3.83c(x)\log c(x)\log x - 3.83c(x)\log^2 c(x)
\end{align*}
and substitute $c(x) = 1 + 1.1817/\log^3x$ in $f(x)$. Using the inequality $\log(1+t) \leq t$ which holds for every $t > -1$, we get $f(x) \geq
g(\log x) \geq 0.056$ for every $x \geq e^{1423.728}$. By Lemma \ref{lem401}, we obtain
\begin{displaymath}
\pi \left( x \left( 1 + \frac{1.1817}{\log^3 x} \right) \right) - \pi(x) > \frac{f(x)/\log^4(x)}{(\log (c(x)x) - 1 - \frac{1}{\log (c(x)x)} -
\frac{2.65}{\log^2 (c(x)x)})(\log x - 1 - \frac{1}{\log x} - \frac{3.83}{\log^2 x})} \geq 0
\end{displaymath}
for every $x \geq e^{1423.728}$. For every $e^{150} \leq x \leq e^{1423.728}$ the theorem follows directly from Proposition \ref{p403}. Further, we use
Proposition \ref{p402} and Proposition \ref{p404} to obtain our theorem for every $10726905041 \leq x < e^{150}$ and every $2898239 \leq x < 10726905041$,
respectively. Next, we check with a computer that
\begin{displaymath}
p_n \left( 1 + \frac{1.1817}{\log^3 p_n} \right) > p_{n+1}
\end{displaymath}
for every $\pi(58889) \leq n \leq \pi(2898239)+1$. Finally, we obtain
\begin{displaymath}
\pi \left( x + \frac{1.1817x}{\log^3 x} \right) > 5949 = \pi(x)
\end{displaymath}
for every $58837 \leq x < 58889$.
\end{proof}

\vspace{5mm}

\textsc{Mathematisches Institut, Heinrich-Heine-Universität Düsseldorf, 40225 Düsseldorf}, \textsc{Germany}

\emph{E-mail address}: \texttt{Christian.Axler@hhu.de}

\end{document}